\newtheorem{theorem}{Theorem}[section]
\newtheorem{lemma}[theorem]{Lemma}
\newtheorem{remark}[theorem]{Remark}
\numberwithin{equation}{section}
\begin{document}

\title{Asymptotic optimality of the edge finite element approximation
of the time-harmonic Maxwell's equations}

\author{
T. Chaumont-Frelet\thanks
{
Inria Univ. Lille and Laboratoire Paul Painlev\'e,
59655 Villeneuve-d'Ascq, France
} {}
and
A. Ern\thanks
{
CERMICS, Ecole des Ponts, 77455 Marne-la-Vall\'ee Cedex 2, 
France and Inria Paris, 75589 Paris, France
}
}

\date{}

\maketitle

\begin{abstract} 
We analyze the conforming approximation of the time-harmonic Maxwell's equations using
N\'ed\'elec (edge) finite elements.
We prove that the approximation is asymptotically optimal, i.e., the approximation 
error in the energy norm is bounded by the best-approximation error 
times a constant that tends to one as the mesh is refined and/or the polynomial
degree is increased. Moreover, under the same conditions on the mesh
and/or the polynomial degree, we establish discrete inf-sup stability with
a constant that corresponds to the continuous constant up to a factor
of two at most. Our proofs apply under minimal regularity assumptions on the
exact solution, so that general domains, material coefficients, and right-hand
sides are allowed.
\end{abstract}

\section{Introduction}

This work analyzes the conforming approximation of the time-harmonic
Maxwell's equations using N\'ed\'elec (edge) finite elements. In this
section, we present the model problem, outline the main challenges in
its finite element approximation, and discuss the present contributions
in view of the existing literature. 

\subsection{Setting}

Let $\Dom\subset \Real^d$, $d=3$, be an open, bounded, Lipschitz polyhedron 
with boundary $\front$ and outward unit normal $\bn_\Dom$. 
We do not make any simplifying assumption on the topology of $\Dom$.
We use boldface fonts for vectors, vector fields, and
functional spaces composed of such fields. More details
on the notation are given in Section~\ref{sec:continuous}. 

Given a positive real number $\omega>0$ representing a frequency and a source
term $\bJ: \Dom \to \mathbb R^3$, and focusing for simplicity on homogeneous
Dirichlet boundary conditions (a.k.a.~perfect electric conductor boundary conditions),
the model problem consists in finding $\bE: \Dom \to \mathbb R^3$ such that
\begin{subequations}
\label{eq_maxwell_strong}
\begin{alignat}{2}
-\omega^2 \eps \bE+\ROT(\bmu^{-1}\ROT \bE) &= \bJ &\quad&\text{in $\Dom$},
\\
\bE \CROSS \bn_\Dom &= \bzero &\quad&\text{on $\partial \Dom$},
\end{alignat}
\end{subequations}
where $\eps$ represents the electric permittivity of the
materials contained in $\Dom$ and $\bmu$ their magnetic permeability. 
Both material properties can vary in $\Dom$ and take symmetric
positive-definite values with eigenvalues uniformly bounded from above and
from below away from zero. 
We assume that $\omega$ is not a resonant frequency,
so that \eqref{eq_maxwell_strong} is uniquely solvable in $\Hrotz$
for every $\bJ$ in the topological dual space $\Hrotz'$. 
The time-harmonic
Maxwell's equations \eqref{eq_maxwell_strong} are one of the central models
of electrodynamics. Therefore, efficient discretizations are
a cornerstone for the computational modelling of electromagnetic 
wave propagation \cite{Hiptmair_Acta_Num_2002,Monk_book_2003}.

\subsection{Main challenges when approximating \eqref{eq_maxwell_strong}}

To highlight the main challenges associated with the finite element approximation of
\eqref{eq_maxwell_strong}, let us first briefly discuss 
the Helmholtz problem that arises when considering
polarized electromagnetic waves. In this case, a two-dimensional domain
$\widehat \Dom \subset \mathbb R^2$ is considered, and the component of the electric
field normal to $\widehat \Dom$, $\widehat E: \widehat \Dom \to \mathbb R$, satisfies
\begin{subequations}
\label{eq_helmholtz_strong}
\begin{alignat}{2}
-\omega^2 \widehat\epsilon \widehat E-\DIV(\widehat\bmu^{-1}\GRAD \widehat E) &= \widehat J &\quad&\text{in $\widehat \Dom$}, \label{eq:EDP_Helmholtz} \\
\widehat E &= 0&\quad&\text{on $\partial \widehat \Dom$},
\end{alignat}
\end{subequations}
where $\widehat J: \widehat \Dom \to \mathbb R$ is the component of 
$\bJ$ normal to $\widehat \Dom$, $\widehat\epsilon$ the normal-normal
component of $\eps$, and $\widehat\bmu$ the transpose-adjugate of the
tangent-tangent components of $\bmu$. The Helmholtz problem also arises
in other contexts, such as (three-dimensional) acoustic wave propagation.

The Laplace operator in \eqref{eq_helmholtz_strong} is coercive over $H^1_0(\widehat \Dom)$,
and the compact embedding $H^1_0(\widehat \Dom) \hookrightarrow 
L^2(\widehat \Dom)$ ensures that
the negative term in~\eqref{eq:EDP_Helmholtz}
is a compact perturbation. As a result, at the continuous level,
\eqref{eq_helmholtz_strong} falls into the framework of the Fredholm alternative, and
the Helmholtz problem is well-posed in $H^1_0(\widehat \Dom)$ 
if and only if $\omega$ is not a resonant frequency.
Compactness also has direct implications at the discrete
level. The most common manifestation is probably 
the celebrated Aubin--Nitsche duality argument.
Specifically, setting $\omega = 0$ and considering conforming 
Lagrange finite elements for
simplicity, it is well-known that the finite element approximation, 
$\widehat E_h$, converges to $\widehat E$
faster in the $L^2(\widehat \Dom)$-norm than in the 
$H^1_0(\widehat \Dom)$-norm.

When $\omega > 0$, this observation can be leveraged into a 
technique often called Schatz's argument \cite{Schatz:74}. 
The key idea is that 
the negative $L^2(\widehat \Dom)$-term in \eqref{eq_helmholtz_strong}
becomes negligible on sufficiently fine meshes, 
leaving only a coercive term and thus enabling the derivation of a 
C\'ea-like lemma. 
More specifically, considering the Lagrange finite element space
$\widehat V_h \subset H^1_0(\widehat \Dom)$
with mesh size $h$ and polynomial degree $k\ge1$,
one can show that if the mesh is sufficiently refined
and/or the polynomial degree is sufficiently increased,
the finite element approximation, $\widehat E_h$, is 
uniquely defined and satisfies the following error bound:
\begin{equation}
\label{eq_optimality_helmholtz}
(1-\gamma) \tnorm{\widehat E-\widehat E_h}
\leq
\min_{\widehat v_h \in \widehat V_h} \tnorm{\widehat E-\widehat v_h},
\end{equation}
where the approximation factor $\gamma$ satisfies
$\lim_{{h/k} \to 0} \gamma = 0$, and
\begin{equation}
\tnorm{w}^2 \eqq \omega^2\|\epsilon^{\frac12}w\|_{\Ldeux}^2 + \|\bmu^{-\frac12}\GRAD w\|_{\Ldeuxd}^2,
\end{equation}
is the natural energy norm for~\eqref{eq_helmholtz_strong}. Crucially, \eqref{eq_optimality_helmholtz} implies
that the finite element approximation is \emph{asymptotically optimal}.
The original argument of Schatz in \cite{Schatz:74} for conforming finite elements
requires some extra smoothness on the solution, and it has been extended in a number of ways.
Of particular importance is the seminal work \cite{melenk_sauter_2010a}
which tracks the dependence of $\gamma$ on the frequency $\omega$, the mesh size
$h$ and the polynomial degree $k$. This has later been extended to non-smooth 
domains and varying
coefficients \cite{chaumontfrelet_nicaise_2020a,LaSpW:22,melenk_sauter_2011a}. 

The challenges encountered in the Helmholtz problem~\eqref{eq_helmholtz_strong}
are also present in the time-harmonic Maxwell's equations \eqref{eq_maxwell_strong},
with one key additional difficulty: the lack of compactness of the
embedding $\Hrotz \hookrightarrow \Ldeuxd$. This is remedied by working in
the subspace
$\Hrotz \cap \Hdiveps$, with $\Hdiveps := \{ \bv \in \bL^2(D); \DIV(\eps\bv) \in L^2(D) \}$,
which compactly embeds into $\Ldeuxd$ \cite{Birman_Solomyak_1987,Costabel:90,Weber:80}.
Therefore, a crucial ingredient in the analysis of any finite element
approximation to \eqref{eq_maxwell_strong} is to derive some suitable
control on the divergence of the discrete solution.
This is discussed in the context of Lagrange finite elements, e.g.,
in \cite{Costabel:91,CosDa:02,BoGuL:16,BuCiJ:09}. However, 
a somewhat more popular approach
to approximate the time-harmonic Maxwell's equations in a conforming setting
hinges on N\'ed\'elec finite
elements \cite{Nedel:80,Nedel:86}.
We now discuss our main contributions to this topic.

\subsection{Main contributions}

The N\'ed\'elec finite element approximation to \eqref{eq_maxwell_strong} 
is $\Hrotz$-conforming,
but only weakly $\Hdiveps$-conforming. The lack of $\Hdiveps$-conformity must be taken 
into account in the error analysis. Early contributions on the topic
are based on the concept of collective compactness.
A seminal work in this regard is \cite{Kikuchi:89} for lowest-order N\'ed\'elec elements,
and extensions to higher-order elements have been carried out 
\cite{BCDDH:11,MonDe:01}. We also refer the reader to \cite{Buffa:05,CaFeR:00}, and to
\cite[Section 7.3]{Monk_book_2003} for an overview.

Later on, duality proofs in the spirit of Schatz were proposed. 
For Maxwell's equations,
the Aubin--Nitsche trick cannot be applied directly, 
so that an intermediate step involving
a commuting interpolation operator is added to the proof. 
This argument seems to date back to
the work of Girault \cite{Girault:88} using canonical
interpolation operators, and has been used for Maxwell's equations in
\cite{Monk:92,ZhSWX:09}. As pointed out in
\cite[Remark 3.1]{Girault:88}, one drawback of using the canonical
interpolation operators is the extra regularity requirement on the exact solution. 
This limitation was lifted following
the development of commuting quasi-interpolation operators working under minimal regularity
assumptions (see \cite{Schoberl:01,ArnFW:06,Christiansen:07,ChrWi:08} and \cite[Chap.~22-23]{EG_volI}). The application
to Maxwell's equations can be found in
\cite{chaumontfrelet_nicaise_pardo_2018a,ErnGu:18},
see also \cite[Chap.~44]{EG_volII}.

The current state-of-the-art using the above techniques 
shows that the N\'ed\'elec finite element 
approximation is uniquely defined
for fine enough meshes, and allows for error estimates of the form
\begin{equation}
\label{eq_quasi_optimality_maxwell}
(1-\gamma) \tnorm{\bE-\bE_h}
\leq
c \min_{\bv_h \in \bV_h\upc} \tnorm{\bE-\bv_h},
\end{equation}
where $\bV_h\upc$ is the N\'ed\'elec finite element space with mesh size $h$ and
polynomial degree $k\ge0$, the approximation factor $\gamma$ again satisfies
$\lim_{h/(k+1) \to 0} \gamma = 0$, and the natural energy norm is
\begin{equation} \label{eq:energy_norm}
\tnorm{\bw}^2
\eqq
\omega^2\|\eps^{\frac12}\bw\|_{\Ldeuxd}^2 + \|\bmu^{-\frac12}\ROT \bw\|_{\Ldeuxd}^2.
\end{equation}
Unfortunately, $c$ is a generic constant that depends on the shape-regularity
of the mesh.

The key contribution of this work is to show that \eqref{eq_quasi_optimality_maxwell}
actually holds true with $c = 1$. In other words, N\'ed\'elec finite element discretizations 
of the time-harmonic Maxwell's equations are
\emph{asymptotically optimal}. Moreover, we establish
discrete inf-sup stability with a constant that corresponds, as the mesh is refined,
to the continuous constant up to a factor of two at most.
These results are, to the best of our knowledge, a novel contribution
to the literature. Our proofs are valid irrespective of
the topology of the domain $\Omega$ and apply under a minimal regularity assumption 
on the exact solution, so that general
domains, material coefficients and right-hand sides are allowed. 
In addition, the dependence of
$\gamma$ on key parameters can be traced following
\cite{ChaVe:22,melenk_sauter_2021}.

\subsection{Outline}

The paper is organized as follows. In Section~\ref{sec:continuous}, we briefly
present the continuous setting, and in Section~\ref{sec:disc_setting}, we 
do the same for the discrete setting. 
In Section~\ref{sec:conforming}, we present the error and stability analysis.
The main results in this section are Theorem~\ref{th:est_err_c} and
Theorem~\ref{th:inf_sup_c}.
Finally, in Section~\ref{sec:bnd_app_fac}, we establish bounds on the 
approximation and divergence conformity factors introduced in the analysis,
proving that these quantities tend to zero as the mesh is refined.
Incidentally, we notice that the results established in Section~\ref{sec:conforming}
hold more generally when working on a generic subspace of $\Hrotz$ which is not necessarily
constructed using finite elements. The finite element structure is used in
Section~\ref{sec:bnd_app_fac}.

\section{Continuous setting}
\label{sec:continuous}

In this section, we briefly recall the functional setting for the time-harmonic
Maxwell's equations, formulate the model problem and examine its inf-sup stability.

\subsection{Functional spaces}

We use standard notation for Lebesgue and Sobolev spaces. 
To alleviate the notation,
the inner product and associated norm in the spaces $\Ldeux$ and $\Ldeuxd$
are denoted by $(\SCAL,\SCAL)$ and $\|\SCAL\|$, respectively. The 
material properties $\eps$ and $\bnu\eqq \bmu^{-1}$ are measurable functions
that take symmetric positive-definite values in
$\Dom$ with eigenvalues uniformly bounded from above and from below away from zero.
It is convenient to introduce  
the inner product and associated norm weighted by either $\eps$ or $\bnu$,
leading to the notation $(\SCAL,\SCAL)_\eps$, $\|\SCAL\|_\eps$, $(\SCAL,\SCAL)_{\bnu}$
and $\|\SCAL\|_{\bnu}$. Whenever no confusion can arise, we use
the symbol $^\perp$ to denote orthogonality with respect to the inner product
$(\SCAL,\SCAL)_\eps$. Moreover, all the projection operators denoted using the 
symbol $\bPi$ are meant to be
orthogonal with respect to this inner product; we say that the projections are 
$\bL^2_\eps$-orthogonal.

We consider the Hilbert Sobolev spaces
\begin{subequations} \label{eq:Hrot_spaces} \begin{align}
\Hrot & \eqq \{\bv \in \Ldeuxd  \st \ROT\bv\in \Ldeuxd\},\\
\Hrotrotz &\eqq \{\bv \in \Hrot \st \ROT\bv=\bzero\},   \\
\Hrotz &\eqq \{\bv \in \Hrot \st \gamma\upc_\front(\bv)=\bzero\}, \\
\Hrotzrotz &\eqq \{\bv \in \Hrotz \st \ROTZ\bv=\bzero\},          
\end{align} \end{subequations}
where the tangential trace operator
$\gamma\upc_{\front}:\Hrot\rightarrow \bH^{-\frac12}(\front)$ is
the extension by density of the tangent trace operator such
that $\gamma\upc_\front(\bv)=\bv|_{\front}\CROSS \bn_\Dom$ for smooth fields.
The subscript ${}_0$ indicates the curl operator acting on fields
respecting homogeneous Dirichlet conditions. 
Notice that $\ROT$ and $\ROTZ$ are adjoint to each other, i.e.,
$(\ROTZ\bv,\bw)=(\bv,\ROT\bw)$ for all
$(\bv,\bw)\in \Hrotz\times\Hrot$.

We consider the subspace
\begin{equation} 
\bX\upc_0\eqq \Hrotz\cap \Hrotzrotz^\perp, 
\end{equation}
and we introduce the $\bL^2_\eps$-orthogonal projection
\begin{equation}
\bPi\upc_0: \Ldeuxd \to \Hrotzrotz.
\end{equation}
Since $\GRAD\Hunz \subset \Hrotzrotz$, any field $\bxi \in \bX\upc_0$ 
is such that $\DIV(\eps\bxi)=0$ in $\Dom$. Hence,
$\bX\upc_0$ compactly embeds into $\Ldeuxd$ \cite{Weber:80}.

\begin{remark}[Topology of $\Dom$]
We have $\Hrotzrotz^\perp\subset \{\bv\in \Ldeuxd, \; \DIV(\eps\bv)=0\}$ 
with equality if only if $\front$ is connected (see, e.g., \cite{AmBDG:98}).
\end{remark}

\subsection{Model problem}

We focus for simplicity on homogeneous Dirichlet boundary conditions and consider the
functional space
\begin{equation}
\bV_0\upc \eqq \Hrotz.
\end{equation}
Given a positive real number $\omega>0$ and a source term 
$\bJ \in (\bV_0\upc)'$ (the topological dual space of $\bV_0\upc$),
the model problem amounts to finding $\bE\in\bV_0\upc$ such that
\begin{equation} \label{eq:weak}
b(\bE,\bw) = \langle\bJ,\bw\rangle \qquad \forall \bw \in \bV_0\upc,
\end{equation}
with the bilinear form defined on $\bV_0\upc\times\bV_0\upc$ such that
\begin{equation}
b(\bv,\bw) \eqq -\omega^2(\bv,\bw)_\eps + (\ROTZ\bv,\ROTZ\bw)_{\bnu},
\end{equation} 
and where the brackets on the right-hand side of~\eqref{eq:weak} denote the duality
product between $(\bV_0\upc)'$ and $\bV_0\upc$.
In what follows, we assume that $\omega^2$ is not an eigenvalue of the 
$\ROT(\bnu\ROTZ {\cdot})$ operator in $\Dom$. As a result, the model problem~\eqref{eq:weak} 
is well-posed.
We equip the space $\Hrot$ and its subspaces defined in~\eqref{eq:Hrot_spaces} 
with the energy norm defined in~\eqref{eq:energy_norm},
and observe that the bilinear form $b$ satisfies $|b(\bv,\bw)|\le \tnorm{\bv}\tnorm{\bw}$.

\subsection{Inf-sup stability}

For all $\bg \in \Ldeuxd$, let $\bv_{\bg}\in \bV_0\upc$ denote the unique
solution to~\eqref{eq:weak} with right-hand side $(\bg,\bw)_\eps$,
i.e., $b(\bv_\bg,\bw)=(\bg,\bw)_\eps$ for all $\bw\in \bV_0\upc$.
We introduce the (nondimensional) stability constant
\begin{equation} \label{eq:def_bst}
\bst \eqq
\sup_{\substack{\bg \in \Hrotzrotz^\perp \\ \|\bg\|_\eps = 1}} \omega \tnorm{\bv_{\bg}}.
\end{equation}

\begin{lemma}[Inf-sup stability] \label{lem:infsup}
The following holds:
\begin{equation} \label{eq:infsup_exact}
\frac{1}{1+2\bst} \le \inf_{\substack{\bv \in \bV_0\upc\\ \tnorm{\bv}=1}} \sup_{\substack{\bw\in \bV_0\upc \\ \tnorm{\bw}=1}} |b(\bv,\bw)| \le \frac{1}{\bst}.
\end{equation}
\end{lemma}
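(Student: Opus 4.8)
The plan is to establish the two inequalities in \eqref{eq:infsup_exact} separately, exploiting the $\bL^2_\eps$-orthogonal splitting $\bV_0\upc = \Hrotzrotz \oplus \bX\upc_0$ into the kernel of $\ROTZ$ and its orthogonal complement, together with the solution operator $\bg \mapsto \bv_\bg$ that defines $\bst$ in \eqref{eq:def_bst}.

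\emph{Upper bound.} For the easy inequality $\inf\sup |b| \le \bst^{-1}$, I would test the inf-sup quotient on a cleverly chosen pair. Given a normalized $\bg \in \Hrotzrotz^\perp$ with $\|\bg\|_\eps=1$ that nearly realizes the supremum in \eqref{eq:def_bst}, consider $\bv = \bv_\bg/\tnorm{\bv_\bg}$. Then for any $\bw$ with $\tnorm{\bw}=1$ we have $b(\bv,\bw) = (\bg,\bw)_\eps/\tnorm{\bv_\bg}$, so the supremum over such $\bw$ is $\tnorm{\bv_\bg}^{-1}\sup_{\tnorm{\bw}=1}|(\bg,\bw)_\eps| = \tnorm{\bv_\bg}^{-1}\,\omega^{-1}\|\bg\|_\eps \cdot \omega \le \tnorm{\bv_\bg}^{-1}\omega^{-1}$, using $\omega\|\bw\|_\eps \le \tnorm{\bw}$. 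Hence the inf over $\bv$ is at most $(\omega\tnorm{\bv_\bg})^{-1}$, which tends to $\bst^{-1}$ as $\bg$ approaches the supremizing sequence. (A small care is needed: $\bv_\bg$ may lie outside $\bX\upc_0$, but that does not affect the computation since only $\tnorm{\bv_\bg}$ and the identity $b(\bv_\bg,\bw)=(\bg,\bw)_\eps$ are used.)

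\emph{Lower bound.} This is the main part. Fix $\bv \in \bV_0\upc$ with $\tnorm{\bv}=1$ and split $\bv = \bPi\upc_0\bv + \bxi$ with $\bPi\upc_0\bv \in \Hrotzrotz$ and $\bxi = \bv - \bPi\upc_0\bv \in \bX\upc_0$. The idea is to build a test function $\bw = \bxi + \alpha\,\bv_{\bg}$ for a suitable $\bg \in \Hrotzrotz^\perp$ and scalar $\alpha$, so that $b(\bv,\bw)$ controls $\tnorm{\bv}$ from below. On the curl part one has $(\ROTZ\bv,\ROTZ\bxi)_{\bnu} = \|\ROTZ\bv\|_{\bnu}^2$ since $\ROTZ\bPi\upc_0\bv = \bzero$, which already captures the full curl contribution to $\tnorm{\bv}^2$. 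The remaining task is to recover $\omega^2\|\bv\|_\eps^2 = \omega^2\|\bPi\upc_0\bv\|_\eps^2 + \omega^2\|\bxi\|_\eps^2$; here the first term is harmless because $-\omega^2(\bv,\bxi)_\eps = -\omega^2\|\bxi\|_\eps^2$ leaves the kernel-part mass uncontrolled, so one compensates by choosing $\bg$ proportional to $\bPi\upc_0\bv$ (after normalizing) — actually to the full $\bv$ projected suitably — and uses $b(\bv,\bv_\bg) = b(\bv_\bg,\bv)$... more precisely, since $b$ is symmetric, $b(\bv,\bv_\bg) = b(\bv_\bg,\bv)$ only if $b$ is symmetric, which it is; but the cleaner route is $b(\bv,\bv_\bg)$ itself is not directly $(\bg,\bv)_\eps$. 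Instead I would use: $\omega\tnorm{\bv_\bg} \le \bst$ whenever $\|\bg\|_\eps=1$, $\bg\in\Hrotzrotz^\perp$, together with $b(\bv_\bg,\bv)=(\bg,\bv)_\eps$ by definition; taking $\bg = \bxi/\|\bxi\|_\eps$ (which lies in $\Hrotzrotz^\perp$) gives $b(\bv_\bg,\bv) = \|\bxi\|_\eps$ and $\tnorm{\bv_\bg}\le\bst/\omega$. Combining $b(\bv,\bxi - \omega^2 \|\bxi\|_\eps\, \bv_\bg \cdot(\text{const}))$ appropriately, and using the triangle inequality $\tnorm{\bw} \le \tnorm{\bxi} + (\text{const})\tnorm{\bv_\bg} \le 1 + 2\bst$ after optimizing the constant, yields $\sup_{\tnorm{\bw}=1}|b(\bv,\bw)| \ge (1+2\bst)^{-1}$.

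\emph{Main obstacle.} The delicate point is the bookkeeping in the lower bound: choosing the coefficient $\alpha$ in $\bw = \bxi + \alpha\bv_\bg$ (with $\bg$ the normalized $\bxi$) so that the cross terms in $b(\bv,\bw)$ — namely $-\omega^2(\bPi\upc_0\bv, \alpha\bv_\bg)_\eps$ and the curl cross term — either vanish or add constructively, while simultaneously keeping $\tnorm{\bw}$ bounded by $1+2\bst$. One must verify that $\bPi\upc_0$ being $\bL^2_\eps$-orthogonal kills exactly the terms that would otherwise spoil the estimate, and that the factor $2$ (rather than some larger constant) emerges from an optimal, not merely convenient, choice of $\alpha$. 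The rest — the duality pairing identities, $\omega\|\cdot\|_\eps\le\tnorm{\cdot}$, and the definition of $\bst$ — is routine once the test function is correctly assembled.
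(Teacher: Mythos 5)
Your upper-bound argument is correct and essentially equivalent to the paper's: you plug in a near-extremal $\bv=\bv_\bg/\tnorm{\bv_\bg}$ directly, while the paper restricts the supremum in the operator-norm characterization $\alpha^{-1}=\sup_{\bphi}\tnorm{\bv_\bphi}/\|\bphi\|_{(\bV_0\upc)'}$ to the particular right-hand sides $(\bg,\SCAL)_\eps$ with $\bg\in\Hrotzrotz^\perp$; both yield $\alpha\le\bst^{-1}$.

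The lower bound, however, has a genuine structural gap, not merely a bookkeeping issue. Your proposed test function $\bw=\bxi+\alpha\bv_\bg$ with $\bg=\bxi/\|\bxi\|_\eps$ is entirely $\bL^2_\eps$-orthogonal to $\Hrotzrotz$: indeed $\bxi=(I-\bPi\upc_0)(\bv)\in\bX\upc_0\subset\Hrotzrotz^\perp$, and $\bv_\bg\in\Hrotzrotz^\perp$ as well, because $\bg\in\Hrotzrotz^\perp$ forces the adjoint solution into $\Hrotzrotz^\perp$ (this is precisely the observation the paper records in Section~\ref{sec:def_factors_c} just after \eqref{eq:adjoint}). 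Consequently $(\bv,\bw)_\eps=(\bxi,\bw)_\eps$, so the contribution $\omega^2\|\bPi\upc_0(\bv)\|_\eps^2$ to $\tnorm{\bv}^2$ never appears in $b(\bv,\bw)$, no matter how $\alpha$ is tuned. The failure is visible already for $\bv\in\Hrotzrotz$: then $\bxi=\bzero$ and $b(\bv,\alpha\bv_\bg)=\alpha(\bg,\bv)_\eps=0$, yet $\tnorm{\bv}=\omega\|\bv\|_\eps>0$, so your $\bw$ gives no information. The missing ingredient is the \emph{negative} of the kernel part: adding $-\bPi\upc_0(\bv)$ to the test function contributes $b\bigl(\bv,-\bPi\upc_0(\bv)\bigr)=+\omega^2\|\bPi\upc_0(\bv)\|_\eps^2$, since $\bPi\upc_0(\bv)$ is curl-free and the leading minus sign in $b$ flips. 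This is exactly the paper's choice $\bw=\bv_0+2\bxi_0-\bv_\Pi$ (in your notation $\bv_0=\bxi$, $\bv_\Pi=\bPi\upc_0(\bv)$, $\bxi_0=\omega^2\|\bxi\|_\eps\,\bv_\bg$, i.e.\ $\alpha=2\omega^2\|\bxi\|_\eps$). With the $-\bv_\Pi$ term in place, the two orthogonalities you already identified — $\bv_\Pi$ is curl-free and $\bv_0+2\bxi_0\in\Hrotzrotz^\perp$ — give the $\tnorm{\SCAL}$-orthogonal split $\tnorm{\bv_0+2\bxi_0-\bv_\Pi}^2=\tnorm{\bv_0+2\bxi_0}^2+\omega^2\|\bv_\Pi\|_\eps^2$, whence $b(\bv,\bw)=\tnorm{\bv}^2$ and $\tnorm{\bw}\le(1+2\bst)\tnorm{\bv}$ follow cleanly. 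So your outline can be repaired, but as written the test function omits a component that is essential, not optional.
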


\begin{proof}
(1) Lower bound. Let $\bv \in \bV_0\upc$ and let us set $\bv=\bv_0+\bv_\Pi$ with
$\bv_0\eqq (I-\bPi\upc_0)(\bv)$ and $\bv_\Pi\eqq \bPi\upc_0(\bv)$. 
Let $\bxi_0 \in \bV_0\upc$ be the adjoint solution such that 
$b(\bw,\bxi_0)=\omega^2 (\bw,\bv_0)_\eps$ for all $\bw\in\bV_0\upc$. Since 
$\bv_0\in \Hrotzrotz^\perp$ by construction, we have
\begin{equation} \label{eq:pty_bxi}
\tnorm{\bxi_0} \le \bst \omega \|\bv_0\|_\eps \le \bst \tnorm{\bv_0},
\end{equation}
owing to the symmetry of $b$ and the definition of $\bst$ for the first bound,
and the definition of the $\tnorm{\SCAL}$-norm for the second bound.
Moreover, taking the test function $\bw\eqq \bv\in\bV_0\upc$ in the adjoint problem,
we infer that
\[
b(\bv,\bxi_0) = \omega^2(\bv,\bv_0)_\eps=\omega^2\|\bv_0\|_\eps^2,
\]
since $\bv_0$ and $\bv_\Pi$ are $\bL^2_\eps$-orthogonal.
In addition, invoking the symmetry of $b$ and since $\bv_\Pi$ is curl-free, we have
\begin{align*}
b(\bv,\bv_0-\bv_\Pi) &=b(\bv_0+\bv_\Pi,\bv_0-\bv_\Pi) 
=b(\bv_0,\bv_0)-b(\bv_\Pi,\bv_\Pi) \\
&=\tnorm{\bv_0}^2 -2\omega^2\|\bv_0\|_\eps^2 + \omega^2\|\bv_\Pi\|_\eps^2
=\tnorm{\bv}^2-2\omega^2\|\bv_0\|_\eps^2.
\end{align*}
Combining the above two identities proves that
\[
b(\bv,\bv_0+2\bxi_0-\bv_\Pi) = \tnorm{\bv}^2.
\]
Finally, owing to~\eqref{eq:pty_bxi}, we have
\begin{align*}
\tnorm{\bv_0+2\bxi_0-\bv_\Pi}^2 &= \tnorm{\bv_0+2\bxi_0}^2 + \omega^2\|\bv_\Pi\|_\eps^2 \\
&\le (1+2\bst)^2 \tnorm{\bv_0}^2 + \omega^2\|\bv_\Pi\|_\eps^2 \le (1+2\bst)^2 \tnorm{\bv}^2,
\end{align*}
since $\tnorm{\bv_0}^2 + \omega^2\|\bv_\Pi\|_\eps^2 = \tnorm{\bv}^2$.
This proves the lower bound.  

(2) Upper bound. For all $\bphi\in (\bV_0\upc)'$, let $\bv_\bphi$ denote the unique
solution to \eqref{eq:weak} with right-hand side $\langle \bphi,\bw\rangle$. 
Let $\alpha$ denote the inf-sup constant in~\eqref{eq:infsup_exact}. 
Then $\alpha>0$ since \eqref{eq:weak} is well posed, and we have
$\alpha^{-1} = \sup_{\bphi\in (\bV_0\upc)'}\frac{\tnorm{\bv_\bphi}}{\|\bphi\|_{(\bV_0\upc)'}}$
(see, e.g., \cite[Lem.~C.51]{EG_volII}).
We consider the linear forms $\bphi_\bg\in(\bV_0\upc)'$ such 
that $\langle \bphi_\bg,\bw\rangle = (\bg,\bw)_\eps$ for all $\bw\in\bV_0\upc$
for some function $\bg\in\Hrotzrotz^\perp$. Owing to the Cauchy--Schwarz inequality and the definition of the $\tnorm{\SCAL}$-norm, we have 
\[
\|\bphi_\bg\|_{(\bV_0\upc)'} = \sup_{\bw\in \bV_0\upc} \frac{|\langle \bphi_\bg,\bw\rangle|}{\tnorm{\bw}}
\le \sup_{\bw\in \bV_0\upc} \frac{\|\bg\|_\eps\|\bw\|_\eps}{\tnorm{\bw}}\le \omega^{-1}\|\bg\|_\eps.
\] 
Restricting the supremum defining $\alpha^{-1}$ to the above linear forms,
we infer that
\[
\alpha^{-1} \ge \sup_{\bg\in \Hrotzrotz^\perp}\frac{\tnorm{\bv_\bg}}{\omega^{-1}\|\bg\|_\eps} = \bst.
\]
This proves the upper bound.
\end{proof}

\begin{remark}[Inf-sup condition] \label{rem:exact_infsup}
Since the stability constant $\bst$ is expected to be large (it
scales as $\omega$ times the reciprocal of the distance of $\omega$ to the spectrum 
of the $\ROT(\bnu\ROTZ{\cdot})$ operator \cite{ChaVe:22}), \eqref{eq:infsup_exact}
means that, up to at most a factor of two, the inf-sup constant of the bilinear form $b$
can be estimated as $(1+2\bst)^{-1}$.
\end{remark}

\section{Discrete setting}
\label{sec:disc_setting}

In this section, we present the
setting to formulate the discrete problem,
and we introduce two (nondimensional) quantities to be used in the error
and stability analysis presented in the next section.

\subsection{Mesh and finite element spaces}

Let $\calT_h$ be an affine simplicial mesh covering $\Dom$ exactly.
A generic mesh cell is denoted $K$, its diameter $h_K$ and its outward unit normal $\bn_K$.
Let $k\ge0$ be the polynomial degree. Let $\polP_{k,d}$ be the space
composed of $d$-variate polynomials of total degree at most $k$ and set
$\bpolP_{k,d}\eqq [\polP_{k,d}]^d$.  Let $\bpolN_{k,d}$ be the space composed of
the $d$-variate N\'ed\'elec polynomials of order $k$ of the first kind
(recall that $\bpolP_{k,d}\subsetneq \bpolN_{k,d} \subsetneq \bpolP_{k+1,d}$).
We consider the discrete subspace:
\begin{equation} 
\bV_{h0}\upc \eqq \bset \bv_h\in
\bV_0\upc \st \bv_h|_K\in \bpolN_{k,d},\, \forall K\in\calT_h\eset. \label{eq:def_Vhc}
\end{equation}
Moreover, we let
\begin{equation}
\bV\upc_{h0}(\ceqz) \eqq \bset \bv_h \in \bV_{h0}\upc \st  \ROTZ \bv_h = \bzero \eset.
\end{equation}
The $\bL^2_\eps$-orthogonal projection 
\begin{equation}
\bPi\upc_{h0} : \Ldeuxd \to \bV\upc_{h0}(\ceqz)
\end{equation}
plays a key role in what follows. In particular, we introduce the subspace
\begin{equation} \label{eq:disc_involution}
\bX\upc_{h0}\eqq \bV_{h0}\upc \cap\bV\upc_{h0}(\ceqz)^\perp,
\end{equation}
which is composed of fields $\bv_h$ such that $\bPi\upc_{h0}(\bv_h)=\bzero$.
Loosely speaking, discrete fields in $\bX\upc_{h0}$ are discretely divergence-free
(and satisfy a finite number of additional constraints when $\front$ is not
connected).

\subsection{Discrete problem}

The discrete problem reads as follows: Find $\bE_h\in \bV_{h0}\upc$ such that
\begin{equation} \label{eq:disc_pb_c}
b(\bE_h,\bw_h)=\langle \bJ,\bw_h\rangle \qquad \forall \bw_h\in \bV_{h0}\upc.
\end{equation}
The well-posedness of~\eqref{eq:disc_pb_c} is established in Section~\ref{sec:conforming}.

\subsection{Approximation and divergence conformity factors}
\label{sec:def_factors_c}

In this section, we define two factors, the approximation factor and the
divergence conformity factor. 
Both factors are used in the error and inf-sup stability analysis. We prove in
Section~\ref{sec:bnd_app_fac} that both factors tend to zero
as the mesh size tends to zero or the polynomial degree is increased.

For all $\btheta \in \Hrotzrotz^\perp$, we consider the adjoint problem
consisting of finding $\bzeta_\btheta \in \bV_0\upc$ such that
\begin{equation} \label{eq:adjoint}
b(\bw,\bzeta_\btheta) = (\bw,\btheta)_\eps \qquad \forall \bw \in \bV_0\upc.
\end{equation}
Taking any test function $\bw \in \Hrotzrotz\subset \bV_0\upc$ shows that 
\begin{equation}
\omega^2(\bw,\bzeta_\btheta)_\eps=b(\bw,\bzeta_\btheta)=\omega^2(\bw,\btheta)_\eps=0,
\end{equation} 
where the first equality follows from $\ROTZ\bw=\bzero$, the second from the
definition of the adjoint solution, and the third from the assumption 
$\btheta \in \Hrotzrotz^\perp$.
Since $\bw$ is arbitrary in $\Hrotzrotz$, this proves that
$\bzeta_\btheta \in \Hrotzrotz^\perp$. Thus, $\bzeta_\btheta \in \bX\upc_0$,
and owing to the compact embedding $\bX\upc_0\hookrightarrow \Ldeuxd$, it is
reasonable to expect that the (nondimensional) approximation factor
\begin{equation}  
\gpc\eqq \sup_{\substack{\btheta \in \Hrotzrotz^\perp\\ \|\btheta\|_\eps=1}} \min_{\bv_h\upc\in \bV\upc_{h0}}
\omega \tnorm{\bzeta_\btheta-\bv_h\upc}, \label{eq:def_gamma_c}
\end{equation}
exhibits some decay rate as the mesh size tends to zero
and/or the polynomial degree is increased.

The second useful quantity is the (nondimensional) divergence conformity factor
\begin{equation}
\label{eq_gamma_bX}
\gdivc \eqq 
\sup_{\substack{\bv_h \in \bX\upc_{h0} \\ \|\ROTZ \bv_h\|_\bnu = 1}}
\omega \|\Pi\upc_0(\bv_h)\|_\eps.
\end{equation}
Loosely speaking, the adopted terminology reminds us that $\gdivc$ essentially
measures how much a discrete field that is discretely divergence-free is not
pointwise divergence-free. It is again reasonable to expect, under the same
conditions as above, that $\gdivc$ tends to zero. 


\section{Error and stability analysis}
\label{sec:conforming}

In this section, we analyze the conforming approximation~\eqref{eq:disc_pb_c} 
of the model problem~\eqref{eq:weak}.
As usual in duality arguments, we first establish an error estimate under a 
smallness condition on the mesh size
(this estimate implies the uniqueness, and thus also the existence, 
of the discrete solution under the same
condition on the mesh size), and then establish an inf-sup condition 
again under the same assumptions.

\subsection{Error decomposition and preliminary bounds} 

Let us define the bilinear form $b^+$ on $\bV_0\upc\times \bV_0\upc$ such that
\begin{equation}
b^+(\bv,\bw) \eqq \omega^2(\bv,\bw)_\eps + (\ROTZ\bv,\ROTZ\bw)_\bnu.
\end{equation}
We define the best-approximation operator
$\bestc:\bV_0\upc\rightarrow \bV\upc_{h0}$ as follows: For
all $\bv \in \bV_0\upc$, $\bestc(\bv) \in \bV\upc_{h0}$ is such that
\begin{equation} \label{eq:def_best_c}
b^+(\bv-\bestc(\bv),\bw_h) = 0 \quad \forall \bw_h\in \bV\upc_{h0}.
\end{equation}

\begin{lemma}[Properties of $\bestc$] \label{lem:best}
The best-approximation operator $\bestc$ defined in~\eqref{eq:def_best_c} enjoys the following two properties:
\begin{subequations} \begin{alignat}{2}
&\tnorm{\bestc(\bv)} \le \tnorm{\bv},&\qquad&\forall \bv\in \bV_0\upc, \label{eq:stab_best} \\
&\bestc(\bv) \in \bV\upc_{h0}(\ceqz)^\perp,&\qquad&\forall \bv \in \bV\upc_{h0}(\ceqz)^\perp. \label{eq:useful_pty}
\end{alignat} \end{subequations}
\end{lemma}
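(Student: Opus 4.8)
The two properties are of a different nature, so I would treat them separately. For the stability bound~\eqref{eq:stab_best}, the key observation is that $b^+$ is symmetric and coercive on $\bV_0\upc$: indeed $b^+(\bw,\bw) = \omega^2\|\bw\|_\eps^2 + \|\ROTZ\bw\|_\bnu^2 = \tnorm{\bw}^2$, so $b^+$ is exactly the inner product associated with the energy norm. Thus $\bestc(\bv)$ is nothing but the $b^+$-orthogonal projection of $\bv$ onto the finite-dimensional subspace $\bV\upc_{h0}$, which is well-defined, and the Pythagorean identity $\tnorm{\bv}^2 = \tnorm{\bestc(\bv)}^2 + \tnorm{\bv - \bestc(\bv)}^2$ immediately yields $\tnorm{\bestc(\bv)} \le \tnorm{\bv}$. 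This step is entirely routine.

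For property~\eqref{eq:useful_pty}, I would argue as follows. Fix $\bv \in \bV\upc_{h0}(\ceqz)^\perp$; I must show $\bestc(\bv)$ is $\bL^2_\eps$-orthogonal to every $\bz_h \in \bV\upc_{h0}(\ceqz)$. Since $\bz_h$ is curl-free, $b^+(\bv - \bestc(\bv), \bz_h) = \omega^2(\bv - \bestc(\bv), \bz_h)_\eps$; but also $\bz_h \in \bV\upc_{h0} \subset \bV\upc_{h0}$ is an admissible test function in the defining relation~\eqref{eq:def_best_c}, so $b^+(\bv - \bestc(\bv), \bz_h) = 0$. Hence $(\bv - \bestc(\bv), \bz_h)_\eps = 0$, i.e. $(\bestc(\bv), \bz_h)_\eps = (\bv, \bz_h)_\eps$. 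Since $\bv \in \bV\upc_{h0}(\ceqz)^\perp$ by hypothesis, the right-hand side vanishes, and therefore $(\bestc(\bv), \bz_h)_\eps = 0$ for all $\bz_h \in \bV\upc_{h0}(\ceqz)$, which is exactly the assertion $\bestc(\bv) \in \bV\upc_{h0}(\ceqz)^\perp$.

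The only point requiring a moment's care — and what I would flag as the main (mild) obstacle — is the interplay between the two subspaces: one must use both that $\bV\upc_{h0}(\ceqz) \subset \bV\upc_{h0}$ (so curl-free discrete fields are legitimate test functions in the Galerkin orthogonality) and that curl-free test functions collapse $b^+$ onto the $\bL^2_\eps$-inner product. Neither is deep, but the argument hinges on chaining them correctly. I would present~\eqref{eq:stab_best} first via the projection/Pythagoras argument, then~\eqref{eq:useful_pty} via the short computation above; no estimate on the approximation or divergence conformity factors is needed here, and the domain topology plays no role.
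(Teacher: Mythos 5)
Your proposal is correct and mirrors the paper's proof: property \eqref{eq:stab_best} via the observation that $b^+$ is the energy inner product and $\bestc$ the corresponding orthogonal projection, and property \eqref{eq:useful_pty} by testing \eqref{eq:def_best_c} with a curl-free discrete field so that $b^+$ collapses to $\omega^2(\cdot,\cdot)_\eps$. The paper states it more tersely, but the reasoning is identical.
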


\begin{proof}
\eqref{eq:stab_best} follows from the fact that the bilinear form $b^+$ is the inner product
associated with the $\tnorm{\SCAL}$-norm. To prove~\eqref{eq:useful_pty}, take any
$\bw_h \in \bV\upc_{h0}(\ceqz)$ in~\eqref{eq:def_best_c} and observe that
$\omega^2(\bv-\bestc(\bv),\bw_h)_\eps = b^+(\bv-\bestc(\bv),\bw_h) = 0$.
\end{proof}

We define the approximation error and the best-approximation error as follows: 
\begin{equation}
\be\eqq \bE-\bE_h, \qquad \beeta\eqq \bE-\bestc(\bE).
\end{equation}
We consider the error decomposition
$\be = \btheta_0 + \btheta_\Pi$,
with
\begin{equation} \label{eq:def_theta} 
\btheta_0\eqq(I-\bPi\upc_0)(\be)\in \Hrotzrotz^\perp,
\qquad
\btheta_\Pi\eqq\bPi\upc_0(\be) \in \Hrotzrotz.
\end{equation}
The motivation behind this decomposition is that $\omega\|\btheta_0\|_\eps$ can be
bounded by a duality argument, whereas $\omega\|\btheta_\Pi\|_\eps$ represents
a divergence conformity error.

\begin{lemma}[Bound on $\btheta_0$] \label{lem:bnd_thet1_c}
We have
\begin{equation} \label{eq:bnd_thet1_c}
\omega \|\btheta_0\|_\eps \le \gpc\, \tnorm{\be},
\end{equation}
with the approximation factor $\gpc$ defined in~\eqref{eq:def_gamma_c}.
\end{lemma}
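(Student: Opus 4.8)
The plan is to obtain the bound on $\btheta_0 = (I-\bPi\upc_0)(\be)$ by a duality argument, exactly in the spirit of the Aubin--Nitsche trick adapted to Maxwell via a Galerkin-orthogonality step. First I would note that $\btheta_0 \in \Hrotzrotz^\perp$ by construction (see~\eqref{eq:def_theta}), so it is a legitimate choice of $\btheta$ in the adjoint problem~\eqref{eq:adjoint}. If $\btheta_0 = \bzero$ the bound is trivial; otherwise, set $\btheta \eqq \|\btheta_0\|_\eps^{-1}\btheta_0$ so that $\|\btheta\|_\eps = 1$, and let $\bzeta_\btheta \in \bV_0\upc$ be the corresponding adjoint solution. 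Taking $\bw = \be \in \bV_0\upc$ as test function in~\eqref{eq:adjoint} gives
\begin{equation*}
b(\be,\bzeta_\btheta) = (\be,\btheta)_\eps = \|\btheta_0\|_\eps^{-1}(\be,\btheta_0)_\eps = \|\btheta_0\|_\eps^{-1}\|\btheta_0\|_\eps^2 = \|\btheta_0\|_\eps,
\end{equation*}
where I used that $(\be,\btheta_0)_\eps = (\btheta_0+\btheta_\Pi,\btheta_0)_\eps = \|\btheta_0\|_\eps^2$ by the $\bL^2_\eps$-orthogonality of the decomposition~\eqref{eq:def_theta}.

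Next I would invoke Galerkin orthogonality. Subtracting the discrete equation~\eqref{eq:disc_pb_c} from the continuous one~\eqref{eq:weak} gives $b(\be,\bw_h) = 0$ for all $\bw_h \in \bV_{h0}\upc$; in particular $b(\be,\bv_h\upc) = 0$ for any $\bv_h\upc \in \bV\upc_{h0}$. Hence, for an arbitrary $\bv_h\upc \in \bV\upc_{h0}$,
\begin{equation*}
\|\btheta_0\|_\eps = b(\be,\bzeta_\btheta) = b(\be,\bzeta_\btheta - \bv_h\upc) \le \tnorm{\be}\,\tnorm{\bzeta_\btheta-\bv_h\upc},
\end{equation*}
using the continuity bound $|b(\bv,\bw)| \le \tnorm{\bv}\tnorm{\bw}$ recalled in the model-problem subsection. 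Multiplying by $\omega$ and taking the minimum over $\bv_h\upc \in \bV\upc_{h0}$, then recognizing the definition~\eqref{eq:def_gamma_c} of $\gpc$ evaluated at $\btheta = \|\btheta_0\|_\eps^{-1}\btheta_0$, yields
\begin{equation*}
\omega\|\btheta_0\|_\eps \le \tnorm{\be}\,\min_{\bv_h\upc\in\bV\upc_{h0}} \omega\tnorm{\bzeta_\btheta-\bv_h\upc} \le \gpc\,\tnorm{\be},
\end{equation*}
which is~\eqref{eq:bnd_thet1_c}.

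The only subtle point — and the step I would treat most carefully — is verifying that $\btheta$ as normalized genuinely lies in the admissible set over which the supremum in~\eqref{eq:def_gamma_c} is taken, namely $\{\btheta \in \Hrotzrotz^\perp : \|\btheta\|_\eps = 1\}$; this is where the error decomposition~\eqref{eq:def_theta} is used in an essential way, since it is precisely designed so that the duality-amenable part $\btheta_0$ is $\bL^2_\eps$-orthogonal to $\Hrotzrotz$. Everything else is a direct chaining of Galerkin orthogonality, the continuity of $b$, and the definitions, so there is no real analytic obstacle; the argument is short once the adjoint problem and the decomposition are in place.
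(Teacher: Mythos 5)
Your proposal is correct and follows essentially the same route as the paper's proof: solve the adjoint problem~\eqref{eq:adjoint} with data taken from the $\Hrotzrotz^\perp$-component $\btheta_0$ of the error, test with $\be$, exploit Galerkin orthogonality to subtract an arbitrary $\bv_h\upc \in \bV\upc_{h0}$, and conclude with the boundedness of $b$ and the definition of $\gpc$. The only cosmetic difference is that you normalize $\btheta$ up front so that the supremum defining $\gpc$ applies directly, whereas the paper works with the unnormalized $\btheta_0$ and multiplies through by $\omega^2$; both bookkeeping choices yield the same inequality.
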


\begin{proof}
We consider
the adjoint problem~\eqref{eq:adjoint} with data $\btheta\eqq\btheta_0$. 
Notice that $\btheta\in\Hrotzrotz^\perp$ by construction.
Let $\bzeta_\btheta \in \bX\upc_0$ denote the
unique adjoint solution to this problem. Since $\be \in \bV_0\upc$, we have 
\[
b(\be,\bzeta_\btheta)=(\be,\btheta_0)_\eps.
\]
Using Galerkin's orthogonality together with
$(\btheta_0,\btheta_\Pi)_\eps=0$, and multiplying by $\omega^2$ gives
\[
\omega^2\|\btheta_0\|_\eps^2 = \omega^2(\be,\btheta_0)_\eps = \omega^2b(\be,\bzeta_\btheta-\bv_h) 
\qquad \forall \bv_h\in \bV\upc_{h0}.
\]
Owing to the boundedness of the bilinear form $b$, we infer that
\[
\omega^2\|\btheta_0\|_\eps^2 \le \omega^2 \tnorm{\be} \, \tnorm{\bzeta_\btheta-\bv_h}.
\]
Invoking the approximation factor $\gpc$ gives
\[
\omega^2\|\btheta_0\|_\eps^2\le \tnorm{\be}\, \gpc \omega \|\btheta_0\|_\eps,
\] 
since $\bv_h$ is arbitrary. This proves the claim. 
\end{proof}

\begin{lemma}[Bound on $\btheta_\Pi$] \label{lem:bnd_thet2_c}
We have
\begin{equation} \label{eq:bnd_thet2_c}
(1-\gdivc)\omega \|\btheta_\Pi\|_\eps \le \omega \|\bPi\upc_0(\beeta)\|_\eps 
+ \gdivc\tnorm{\btheta_0}.
\end{equation}
\end{lemma}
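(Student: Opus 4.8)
\textbf{Proof plan for Lemma~\ref{lem:bnd_thet2_c}.}
The plan is to relate the $\bL^2_\eps$-norm of $\btheta_\Pi = \bPi\upc_0(\be)$ to
the corresponding discrete quantity, exploiting the fact that the
best-approximation $\bestc(\bE)$ is computable in the discrete space and that the
difference between $\bPi\upc_0$ and $\bPi\upc_{h0}$ is precisely controlled by the
divergence conformity factor $\gdivc$. First I would write $\beeta - \be = \bE_h -
\bestc(\bE) \in \bV_{h0}\upc$ and split this discrete field using the
$\bL^2_\eps$-orthogonal decomposition induced by $\bPi\upc_{h0}$, namely $\bE_h -
\bestc(\bE) = \bw_{h0} + \bw_{h\Pi}$ with $\bw_{h\Pi} = \bPi\upc_{h0}(\bE_h -
\bestc(\bE)) \in \bV\upc_{h0}(\ceqz)$ and $\bw_{h0} = (I-\bPi\upc_{h0})(\bE_h -
\bestc(\bE)) \in \bX\upc_{h0}$. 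The point is that $\bPi\upc_0$ annihilates the
discrete curl-free part $\bw_{h\Pi}$ (since $\bV\upc_{h0}(\ceqz) \subset
\Hrotzrotz$, as these discrete fields are genuinely curl-free and satisfy the
Dirichlet condition), so $\bPi\upc_0$ applied to $\bE_h - \bestc(\bE)$ reduces to
$\bPi\upc_0(\bw_{h0})$, and $\bw_{h0}$ lies in $\bX\upc_{h0}$ where the factor
$\gdivc$ from~\eqref{eq_gamma_bX} applies.

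Concretely, I would start from the triangle inequality
\[
\omega\|\btheta_\Pi\|_\eps = \omega\|\bPi\upc_0(\be)\|_\eps
\le \omega\|\bPi\upc_0(\beeta)\|_\eps + \omega\|\bPi\upc_0(\beeta - \be)\|_\eps
= \omega\|\bPi\upc_0(\beeta)\|_\eps + \omega\|\bPi\upc_0(\bw_{h0})\|_\eps,
\]
using $\beeta - \be = \bE_h - \bestc(\bE)$ and the vanishing of $\bPi\upc_0$ on
$\bw_{h\Pi}$. Next, applying the definition of $\gdivc$ to $\bw_{h0} \in
\bX\upc_{h0}$ gives $\omega\|\bPi\upc_0(\bw_{h0})\|_\eps \le \gdivc
\|\ROTZ\bw_{h0}\|_\bnu = \gdivc\|\ROTZ(\bE_h - \bestc(\bE))\|_\bnu$, since
$\bw_{h\Pi}$ is curl-free so $\ROTZ\bw_{h0} = \ROTZ(\bw_{h0}+\bw_{h\Pi}) =
\ROTZ(\bE_h - \bestc(\bE))$. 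It then remains to bound $\|\ROTZ(\bE_h -
\bestc(\bE))\|_\bnu \le \tnorm{\bE_h - \bestc(\bE)}$ and to recognize that
$\tnorm{\bE_h - \bestc(\bE)} = \tnorm{\beeta - \be}$ can be controlled by the
quantities already at hand: using $\beeta - \be = -\btheta_0 - \btheta_\Pi +
\beeta$ together with $\bL^2_\eps$-orthogonality of $\btheta_0$ and
$\btheta_\Pi$, and the property $\bestc(\bv)\in\bV\upc_{h0}(\ceqz)^\perp$ for
$\bv\in\bV\upc_{h0}(\ceqz)^\perp$ from Lemma~\ref{lem:best}, one should be able to
isolate the relevant curl term. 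In fact, since the curl-free parts contribute only
to the $\omega^2\|\cdot\|_\eps^2$ piece, the curl of $\beeta - \be$ equals
$\ROTZ(\beeta) - \ROTZ(\be)$ and the best-approximation error in the curl is
controlled by $\tnorm{\beeta}$; but the cleaner route is to observe that
$\bw_{h\Pi}$, being a discrete gradient-type term, can be chosen so that $\bw_{h0}$
inherits $\ROTZ\bw_{h0} = \ROTZ(\bE_h) - \ROTZ(\bestc(\bE))$ with
$\|\ROTZ\bw_{h0}\|_\bnu^2 + \omega^2\|\bw_{h0}\|_\eps^2 \le \tnorm{\bE_h -
\bestc(\bE)}^2 = \|\ROTZ(\beeta - \be)\|_\bnu^2 + \omega^2\|\beeta - \be\|_\eps^2$,
and the $\beeta$ and $\btheta_0$ contributions separate out along the lines of the
stated right-hand side.

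The main obstacle I anticipate is \emph{bookkeeping the curl term correctly}: one
must show that $\|\ROTZ\bw_{h0}\|_\bnu$ — or whatever curl norm enters after
applying $\gdivc$ — is bounded by $\tnorm{\btheta_0}$ up to the best-approximation
contribution $\omega\|\bPi\upc_0(\beeta)\|_\eps$, rather than by the full
$\tnorm{\be}$, since otherwise the $\gdivc$ term could not be absorbed later. The
key structural fact making this work is that $\ROTZ\btheta_\Pi = \bzero$ (as
$\btheta_\Pi \in \Hrotzrotz$), so the curl of the error $\be$ equals the curl of
$\btheta_0$ alone; combined with $\bestc$ preserving the relevant orthogonality
(Lemma~\ref{lem:best}\,\eqref{eq:useful_pty}) and with $\beeta$ being split as
$\btheta_0$-aligned plus a discretely-curl-free remainder, the curl of $\beeta -
\be$ should reduce to a term bounded by $\tnorm{\btheta_0} + $ (best-approximation
part). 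Assembling these estimates and dividing through by the factor arising from
moving the $\gdivc\omega\|\btheta_\Pi\|_\eps$ contribution (if any) to the
left-hand side yields the claimed inequality $(1-\gdivc)\omega\|\btheta_\Pi\|_\eps
\le \omega\|\bPi\upc_0(\beeta)\|_\eps + \gdivc\tnorm{\btheta_0}$. I would double
check the precise point at which the factor $(1-\gdivc)$ appears — it must come
from a self-referential estimate in which $\omega\|\btheta_\Pi\|_\eps$ reappears on
the right multiplied by $\gdivc$, presumably because $\tnorm{\bE_h-\bestc(\bE)}$ or
$\tnorm{\beeta-\be}$ is itself estimated using $\omega\|\btheta_\Pi\|_\eps$ via the
decomposition $\be = \btheta_0 + \btheta_\Pi$.
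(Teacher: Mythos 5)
Your framework is recognizably a reorganization of the paper's argument (both approaches ultimately hinge on applying $\gdivc$ to $\bestc(\btheta_0)$ and $\bestc(\btheta_\Pi)$ together with the stability bound~\eqref{eq:stab_best}), but as written the proposal has two gaps, one of them a genuine error.

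First, the claim that ``$\bPi\upc_0$ annihilates the discrete curl-free part $\bw_{h\Pi}$ since $\bV\upc_{h0}(\ceqz)\subset\Hrotzrotz$'' is backwards: $\bPi\upc_0$ is the $\bL^2_\eps$-orthogonal projection \emph{onto} $\Hrotzrotz$, so it leaves elements of $\bV\upc_{h0}(\ceqz)$ \emph{fixed}, not sent to zero. Your intermediate identity $\bPi\upc_0(\beeta-\be)=\bPi\upc_0(\bw_{h0})$ is nevertheless true, but for a different reason: in fact $\bw_{h\Pi}=\bPi\upc_{h0}(\bE_h-\bestc(\bE))=\bzero$, because Galerkin orthogonality and~\eqref{eq:def_best_c} give $(\bE_h-\bE,\bv_h)_\eps=0$ and $(\bE-\bestc(\bE),\bv_h)_\eps=0$ for every $\bv_h\in\bV\upc_{h0}(\ceqz)$ (both bilinear forms reduce to the $\bL^2_\eps$ pairing when $\ROTZ\bv_h=\bzero$). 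So $\bE_h-\bestc(\bE)$ itself lies in $\bX\upc_{h0}$, and your split is actually the trivial split $\bw_{h0}=\bE_h-\bestc(\bE)$, $\bw_{h\Pi}=\bzero$. You should state and prove this instead.

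Second — and this is the obstacle you correctly anticipated but did not resolve — the curl term $\|\ROTZ(\bE_h-\bestc(\bE))\|_\bnu$ cannot be bounded by $\tnorm{\beeta-\be}$ or $\tnorm{\be}+\tnorm{\beeta}$, since that produces a term $\gdivc\tnorm{\be}$ which is too weak to absorb. The missing idea is to use linearity of $\bestc$ together with $\bestc(\bE_h)=\bE_h$ to write
\[
\bE_h-\bestc(\bE)=-\bestc(\bE-\bE_h)=-\bestc(\be)=-\bestc(\btheta_0)-\bestc(\btheta_\Pi),
\]
and then apply the stability bound $\|\ROTZ\bestc(\bv)\|_\bnu\le\tnorm{\bestc(\bv)}\le\tnorm{\bv}$ from Lemma~\ref{lem:best} to each piece separately, giving $\|\ROTZ(\bE_h-\bestc(\bE))\|_\bnu\le\tnorm{\btheta_0}+\tnorm{\btheta_\Pi}=\tnorm{\btheta_0}+\omega\|\btheta_\Pi\|_\eps$. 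Substituting this into your triangle-inequality chain and moving the $\gdivc\,\omega\|\btheta_\Pi\|_\eps$ contribution to the left-hand side produces exactly $(1-\gdivc)\omega\|\btheta_\Pi\|_\eps\le\omega\|\bPi\upc_0(\beeta)\|_\eps+\gdivc\tnorm{\btheta_0}$. The paper reaches the same inequality by a slightly different bookkeeping: it writes $\|\btheta_\Pi\|_\eps^2=(\btheta_\Pi,\bestc(\btheta_\Pi))_\eps+(\btheta_\Pi,\btheta_\Pi-\bestc(\btheta_\Pi))_\eps$, bounds the first term self-referentially through $\gdivc$, and then expands the second using $\btheta_\Pi-\bestc(\btheta_\Pi)=\beeta-(\btheta_0-\bestc(\btheta_0))$; algebraically this is the same grouping as yours once you expand $\bE_h-\bestc(\bE)=-\bestc(\btheta_0)-\bestc(\btheta_\Pi)$.
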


\begin{proof}
We observe that $\bPi\upc_{h0}(\btheta_\Pi)=\bPi\upc_{h0}(\bPi\upc_0(\be))
=\bPi\upc_{h0}(\be)=\bzero$ owing to Galerkin's orthogonality. Indeed,
$\omega^2(\be,\bw_h)_\eps=b(\be,\bw_h)=0$ for all $\bw_h\in \bV\upc_{h0}(\ceqz)$.
Hence, we have
\begin{equation} \label{eq:theta_Pi_perp_c}
\btheta_\Pi \in \bV\upc_{h0}(\ceqz)^\perp.
\end{equation}
To bound $\btheta_\Pi$, we write
\[
\|\btheta_\Pi\|_\eps^2 = (\btheta_\Pi,\bestc(\btheta_\Pi))_\eps
+ (\btheta_\Pi,\btheta_\Pi-\bestc(\btheta_\Pi))_\eps \qqe \Theta_1+\Theta_2,
\]
and estimate the two terms on the right-hand side. 
Since $\btheta_\Pi=\bPi\upc_0(\btheta_\Pi)$ and 
$\bPi\upc_0$ is self-adjoint for the inner product $(\SCAL,\SCAL)_\eps$,
we obtain
\begin{align*}
\Theta_1 = (\btheta_\Pi,\bestc(\btheta_\Pi))_\eps &= (\btheta_\Pi,\bPi\upc_0(\bestc(\btheta_\Pi)))_\eps \\
&\le \|\btheta_\Pi\|_\eps \, \|\bPi\upc_0(\bestc(\btheta_\Pi))\|_\eps \\
&\le \|\btheta_\Pi\|_\eps \,\gdivc \omega^{-1} \|\ROTZ\bestc(\btheta_\Pi)\|_\bnu,
\end{align*}
where we used the Cauchy--Schwarz inequality in the second line and 
the divergence conformity factor defined 
in~\eqref{eq_gamma_bX} in the third line
(recall that $\bestc(\btheta_\Pi)\in \bV\upc_{h0}$
by construction and observe that $\bestc(\btheta_\Pi) \in \bV\upc_{h0}(\ceqz)^\perp$
owing to~\eqref{eq:useful_pty} and~\eqref{eq:theta_Pi_perp_c}). 
Since 
\[
\|\ROTZ\bestc(\btheta_\Pi)\|_\bnu \le \tnorm{\bestc(\btheta_\Pi)}
\le \tnorm{\btheta_\Pi} = \omega\|\btheta_\Pi\|_\eps,
\]
owing to the definition of the $\tnorm{\SCAL}$-norm and the stability 
property~\eqref{eq:stab_best} of $\bestc$, we infer that
\[
|\Theta_1| \le \gdivc \|\btheta_\Pi\|_\eps^2.
\]
Furthermore, recalling that $\btheta_\Pi = \bE-\btheta_0-\bE_h$ and the definition of $\beeta$, we obtain
\[
\Theta_2 = (\btheta_\Pi,\beeta)_\eps - (\btheta_\Pi,\btheta_0-\bestc(\btheta_0))_\eps \qqe \Theta_{2,1}-\Theta_{2,2},
\]
where we used that $\bE_h-\bestc(\bE_h)=\bzero$. 
The Cauchy--Schwarz inequality gives
\[
|\Theta_{2,1}| = |(\btheta_\Pi,\beeta)_\eps| = |(\btheta_\Pi,\bPi\upc_0(\beeta))_\eps|
\le \|\btheta_\Pi\|_\eps\, \|\bPi\upc_0(\beeta)\|_\eps.
\]
Concerning $\Theta_{2,2}$, we have
\begin{align*}
|\Theta_{2,2}| &= |(\btheta_\Pi,\btheta_0-\bestc(\btheta_0))_\eps| = |(\btheta_\Pi,\bestc(\btheta_0))_\eps| 
= |(\btheta_\Pi,\bPi\upc_0(\bestc(\btheta_0)))_\eps| \\
&\le \|\btheta_\Pi\|_\eps\, \|\bPi\upc_0(\bestc(\btheta_0))\|_\eps \le \|\btheta_\Pi\|_\eps\, 
\gdivc\omega^{-1}
\|\ROTZ\bestc(\btheta_0)\|_\bnu \le \|\btheta_\Pi\|_\eps\, \gdivc\omega^{-1} \tnorm{\btheta_0}.
\end{align*}
Notice that we can again use the divergence conformity factor $\gdivc$ since $\btheta_0 \in
\Hrotzrotz^\perp$ implies by~\eqref{eq:useful_pty} that $\bestc(\btheta_0)\in 
\bV\upc_{h0}(\ceqz)^\perp$. 
Putting the above bounds on $\Theta_{2,1}$ and $\Theta_{2,2}$ together gives
\[
|\Theta_2| \le \|\btheta_\Pi\|_\eps\, \omega^{-1} \Big( \omega \|\bPi\upc_0(\beeta)\|_\eps
+ \gdivc \tnorm{\btheta_0}\Big).
\]  
The estimate~\eqref{eq:bnd_thet2_c} follows by combining the above bounds on $\Theta_1$ and $\Theta_2$.
\end{proof}

\begin{remark}[Lemma~\ref{lem:bnd_thet2_c}] 
Obviously, the estimate \eqref{eq:bnd_thet2_c} is meaningful only if $\gdivc<1$,
which holds true if the mesh size is small enough and/or the polynomial degree
is large enough; see Section~\ref{sec:bnd_app_fac} for further insight.
\end{remark}

\begin{lemma}[Bound on $\ROTZ\btheta_0$] \label{lem:bnd_rot_theta_c}
We have
\begin{equation} \label{eq:bnd_rot_theta_c}
\|\ROTZ\btheta_0\|_\bnu^2\le \tnorm{(I-\bPi\upc_0)(\beeta)}^2
+ (2\gdivc+3\gpc^2)\tnorm{\be}^2 + 2\gdivc\omega^2\|\btheta_\Pi\|_\eps^2.
\end{equation}
\end{lemma}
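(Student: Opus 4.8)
The plan is to exploit Galerkin's orthogonality on the curl part and then bring in the factors $\gpc$ and $\gdivc$ through two separate mechanisms. Since $\btheta_\Pi\in\Hrotzrotz$ is curl-free, $\ROTZ\btheta_0=\ROTZ\be$, so it suffices to estimate $\|\ROTZ\be\|_\bnu^2$. Introduce the discrete field $\bchi_h\eqq\bestc(\bE)-\bE_h\in\bV_{h0}\upc$, so that $\be=\beeta+\bchi_h$ and $\ROTZ\be=\ROTZ\beeta+\ROTZ\bchi_h$. Expanding $\|\ROTZ\be\|_\bnu^2=(\ROTZ\be,\ROTZ\beeta)_\bnu+(\ROTZ\be,\ROTZ\bchi_h)_\bnu$, using the definition of $b$ and Galerkin's orthogonality $b(\be,\bchi_h)=0$ (legitimate since $\bchi_h$ is discrete), the $\ROTZ\bchi_h$-contribution becomes an $\bL^2_\eps$-term:
\[
\|\ROTZ\btheta_0\|_\bnu^2 = (\ROTZ\be,\ROTZ\beeta)_\bnu + \omega^2(\be,\bchi_h)_\eps.
\]

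The crucial structural fact is that $\bchi_h\in\bX\upc_{h0}$, i.e.\ that $\bchi_h$ is discretely divergence-free. Indeed, $\bestc$ is linear and fixes discrete fields, so $\bchi_h=\bestc(\be)$; moreover $\be=\btheta_0+\btheta_\Pi$ belongs to $\bV\upc_{h0}(\ceqz)^\perp$, because $\btheta_0\in\Hrotzrotz^\perp\subset\bV\upc_{h0}(\ceqz)^\perp$ (as $\bV\upc_{h0}(\ceqz)\subset\Hrotzrotz$) and $\btheta_\Pi\in\bV\upc_{h0}(\ceqz)^\perp$ by~\eqref{eq:theta_Pi_perp_c}. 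Hence~\eqref{eq:useful_pty} gives $\bchi_h=\bestc(\be)\in\bV\upc_{h0}(\ceqz)^\perp$, and since $\bchi_h$ is discrete, $\bchi_h\in\bX\upc_{h0}$. I shall also use $\|\ROTZ\bchi_h\|_\bnu\le\tnorm{\bchi_h}=\tnorm{\bestc(\be)}\le\tnorm{\be}$, by~\eqref{eq:energy_norm} and the stability~\eqref{eq:stab_best} of $\bestc$.

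It remains to bound the two terms. For the first one, Cauchy--Schwarz and Young's inequality give $(\ROTZ\be,\ROTZ\beeta)_\bnu\le\tfrac12\|\ROTZ\btheta_0\|_\bnu^2+\tfrac12\|\ROTZ\beeta\|_\bnu^2$, and since $\bPi\upc_0(\beeta)$ is curl-free, $\|\ROTZ\beeta\|_\bnu^2=\tnorm{(I-\bPi\upc_0)(\beeta)}^2-\omega^2\|(I-\bPi\upc_0)(\beeta)\|_\eps^2$ by~\eqref{eq:energy_norm}. For the second one, split $\be=\btheta_0+\btheta_\Pi$. On the $\btheta_0$-part, use $\btheta_0\in\Hrotzrotz^\perp$ to replace $\bchi_h$ by $(I-\bPi\upc_0)(\bchi_h)=\btheta_0-(I-\bPi\upc_0)(\beeta)$, which turns $\omega^2(\btheta_0,\bchi_h)_\eps$ into $\omega^2\|\btheta_0\|_\eps^2-\omega^2(\btheta_0,(I-\bPi\upc_0)(\beeta))_\eps$; bound $\omega^2\|\btheta_0\|_\eps^2\le(\gpc)^2\tnorm{\be}^2$ by Lemma~\ref{lem:bnd_thet1_c}, and estimate the cross term by Young's inequality so as to produce a $+\tfrac12\omega^2\|(I-\bPi\upc_0)(\beeta)\|_\eps^2$ that \emph{cancels exactly} the negative defect coming from $\|\ROTZ\beeta\|_\bnu^2$. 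On the $\btheta_\Pi$-part, use $\btheta_\Pi\in\Hrotzrotz$ to reduce to $\omega^2(\btheta_\Pi,\bPi\upc_0(\bchi_h))_\eps$, then invoke the definition~\eqref{eq_gamma_bX} of $\gdivc$ (permissible since $\bchi_h\in\bX\upc_{h0}$) together with $\|\ROTZ\bchi_h\|_\bnu\le\tnorm{\be}$ to get $\omega^2(\btheta_\Pi,\bchi_h)_\eps\le\gdivc\,\omega\|\btheta_\Pi\|_\eps\,\tnorm{\be}$, which a balanced Young's inequality splits as $\tfrac12\gdivc\omega^2\|\btheta_\Pi\|_\eps^2+\tfrac12\gdivc\tnorm{\be}^2$. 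Collecting the bounds, absorbing $\tfrac12\|\ROTZ\btheta_0\|_\bnu^2$ into the left-hand side, and multiplying by two yields~\eqref{eq:bnd_rot_theta_c} (in fact with the slightly sharper constant $\gdivc$ in place of $2\gdivc$).

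The main obstacle is the identification $\bchi_h=\bestc(\be)\in\bX\upc_{h0}$, which is precisely what makes the divergence conformity factor $\gdivc$ applicable to $\bchi_h$; once this is in place, the remaining difficulty is purely bookkeeping---arranging the Young inequalities so that the $\omega^2\|(I-\bPi\upc_0)(\beeta)\|_\eps^2$ terms cancel and the coefficient of $\tnorm{(I-\bPi\upc_0)(\beeta)}^2$ stays equal to one.
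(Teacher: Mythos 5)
Your proof is correct, and it takes a genuinely different route from the paper's with a sharper outcome. The paper starts from $b(\btheta_0,\btheta_0)$, splits off $b(\be,(I-\bPi\upc_0)(\bE_h-\bestc(\bE)))$ via Galerkin orthogonality, and bounds $\|\ROTZ(\bE_h-\bestc(\bE))\|_\bnu$ by the triangle inequality, picking up the factor $2\tnorm{\be}$. You instead observe that $\bchi_h = \bestc(\bE)-\bE_h = \bestc(\be)$ (since $\bestc$ is linear and fixes $\bV_{h0}\upc$), which delivers the sharper bound $\|\ROTZ\bchi_h\|_\bnu \le \tnorm{\bestc(\be)} \le \tnorm{\be}$ in one stroke, and simultaneously gives the membership $\bchi_h\in\bX\upc_{h0}$ cleanly through~\eqref{eq:useful_pty} applied to $\be\in\bV\upc_{h0}(\ceqz)^\perp$, rather than by the paper's direct $b$/$b^+$ computation. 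You also work directly with $\|\ROTZ\be\|_\bnu^2=(\ROTZ\be,\ROTZ\beeta)_\bnu+(\ROTZ\be,\ROTZ\bchi_h)_\bnu$ and use Galerkin orthogonality $b(\be,\bchi_h)=0$ to turn the second term into an $\bL^2_\eps$-term, then arrange the Young inequalities so that the $\pm\tfrac12\omega^2\|(I-\bPi\upc_0)(\beeta)\|_\eps^2$ contributions cancel exactly. The net effect is that your bound reads $\|\ROTZ\btheta_0\|_\bnu^2\le\tnorm{(I-\bPi\upc_0)(\beeta)}^2+(\gdivc+3\gpc^2)\tnorm{\be}^2+\gdivc\,\omega^2\|\btheta_\Pi\|_\eps^2$, which is strictly stronger than~\eqref{eq:bnd_rot_theta_c} (with $\gdivc$ in place of $2\gdivc$) and would propagate to slightly better constants in Theorem~\ref{th:est_err_c}; the paper's presentation trades this small improvement for a somewhat more direct exhibition of the Galerkin-orthogonality mechanism on $b(\btheta_0,\btheta_0)$.
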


\begin{proof}
Recalling that $\btheta_0=(I-\bPi\upc_0)(\be)$, a straightforward calculation shows that
\begin{align*}
b(\btheta_0,\btheta_0) &= b(\btheta_0,(I-\bPi\upc_0)(\be)) \\
&=b(\btheta_0,(I-\bPi\upc_0)(\beeta))-b(\btheta_0,(I-\bPi\upc_0)(\bE_h-\bestc(\bE))) \\
&=b(\btheta_0,(I-\bPi\upc_0)(\beeta))-b(\be,(I-\bPi\upc_0)(\bE_h-\bestc(\bE))) \\
&\le \tnorm{\btheta_0} \, \tnorm{(I-\bPi\upc_0)(\beeta)}-b(\be,(I-\bPi\upc_0)(\bE_h-\bestc(\bE))),
\end{align*}
where we used that $b(\btheta_\Pi,(I-\bPi\upc_0)(\SCAL)) = 0$ on the third line
and the boundedness of $b$ on the fourth line. Focusing on the second term on the
right-hand side, we notice using Galerkin's orthogonality that
\begin{align*}
-b(\be,(I-\bPi\upc_0)(\bE_h-\bestc(\bE))) &=
b(\be,\bPi\upc_0(\bE_h-\bestc(\bE))) \\
&=-\omega^2 (\btheta_\Pi,\bPi\upc_0(\bE_h-\bestc(\bE)))_\eps \\
&\le \omega^2 \|\btheta_\Pi\|_\eps\, \|\bPi\upc_0(\bE_h-\bestc(\bE))\|_\eps.
\end{align*}
For all $\bv_h\in \bV\upc_{h0}(\ceqz)$, we have
\begin{align*}
\omega^2(\bE_h-\bestc(\bE),\bv_h)_\eps
=
-b(\bE_h,\bv_h)-b^+(\bestc(\bE),\bv_h)
&=
-b(\bE,\bv_h)-b^+(\bE,\bv_h)
\\
&= \omega^2(\bE,\bv_h)-\omega^2(\bE,\bv_h) = 0,
\end{align*}
where the first and third equalities follow from the fact that $\bv_h$ is curl-free,
and the second from the definition of $\bestc$ and Galerkin's orthogonality.
This shows that
\[
\bE_h-\bestc(\bE) \in \bV\upc_{h0}(\ceqz)^\perp.
\]
Using the divergence conformity factor $\gdivc$ defined in~\eqref{eq_gamma_bX} then yields
\[
|b(\be,(I-\bPi\upc_0)(\bE_h-\bestc(\bE)))| \le 
\gdivc \omega \|\btheta_\Pi\|_\eps \|\ROTZ(\bE_h-\bestc(\bE))\|_\bnu.
\]
Invoking the triangle inequality and the stability property~\eqref{eq:stab_best} of $\bestc$ gives
\begin{align*}
\|\ROTZ(\bE_h-\bestc(\bE))\|_\bnu & \le \|\ROTZ(\bE_h-\bE)\|_\bnu+\|\ROTZ(\bE-\bestc(\bE))\|_\bnu \\
& \le \tnorm{\be}+\tnorm{\bE-\bestc(\bE)} 
\le 2\tnorm{\be},
\end{align*}
since $\tnorm{\bE-\bestc(\bE)} \le \tnorm{\bE-\bE_h}=\tnorm{\be}$ 
by definition of the best-approximation operator $\bestc$. 
Putting everything together gives
\[
b(\btheta_0,\btheta_0) \le \tnorm{\btheta_0} \, \tnorm{(I-\bPi\upc_0)(\beeta)} + 2\gdivc \omega \|\btheta_\Pi\|_\eps\tnorm{\be}.
\]
As a result, we have
\begin{align*}
\|\ROTZ\btheta_0\|_\bnu^2 &= b(\btheta_0,\btheta_0) + \omega^2\|\btheta_0\|_\eps^2 \\
& \le \tnorm{\btheta_0} \, \tnorm{(I-\bPi\upc_0)(\beeta)} + 2\gdivc \omega \|\btheta_\Pi\|_\eps\tnorm{\be}+ \omega^2\|\btheta_0\|_\eps^2.
\end{align*}
Invoking Young's inequality for the first and second terms on the right-hand side 
and using that $\tnorm{\btheta_0}^2=\|\ROTZ\btheta_0\|_\bnu^2+\omega^2\|\btheta_0\|_\eps^2$, we infer that
\[
\|\ROTZ\btheta_0\|_\bnu^2 \le \tnorm{(I-\bPi\upc_0)(\beeta)}^2 + 
2\gdivc\tnorm{\be}^2 + 2\gdivc\omega^2 \|\btheta_\Pi\|_\eps^2 + 3\omega^2\|\btheta_0\|_\eps^2.
\]
The assertion now follows from the bound on $\btheta_0$ established in Lemma~\ref{lem:bnd_thet1_c}.
\end{proof}

\subsection{Error estimate}

We are now ready to establish our main error estimate.

\begin{theorem}[A priori error estimate and discrete well-posedness] \label{th:est_err_c}
Assume that $\gdivc \le 1$. 
The following holds:
\begin{equation} \label{eq:opt_err_c}
(1-15\gdivc-4\gpc^2)\tnorm{\bE-\bE_h}
\le
\min_{\bv_h \in \bV_{h0}\upc}\tnorm{\bE-\bv_h}.
\end{equation}
Consequently, if the mesh size is small enough and/or the polynomial
degree is large enough so that $15\gdivc+4\gpc^2<1$,
the discrete problem~\eqref{eq:disc_pb_c} is well-posed. 
\end{theorem}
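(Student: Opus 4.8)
The strategy is to bound the energy norm of the total error $\tnorm{\be}^2 = \tnorm{\btheta_0}^2 + \omega^2\|\btheta_\Pi\|_\eps^2$ by the best-approximation quantity $\tnorm{\beeta}$, which itself is controlled by $\min_{\bv_h\in\bV_{h0}\upc}\tnorm{\bE-\bv_h}$ via the stability of $\bestc$ (Lemma~\ref{lem:best}). The work has already been split into manageable pieces: Lemma~\ref{lem:bnd_thet1_c} controls $\omega\|\btheta_0\|_\eps$ by $\gpc\tnorm{\be}$, Lemma~\ref{lem:bnd_rot_theta_c} controls $\|\ROTZ\btheta_0\|_\bnu^2$, and Lemma~\ref{lem:bnd_thet2_c} controls $\omega\|\btheta_\Pi\|_\eps$. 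So the proof is essentially a bookkeeping exercise: feed these three estimates into one another and collect the coefficients of $\tnorm{\be}^2$ on the right-hand side.

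\textbf{Key steps, in order.} First, I would use Lemma~\ref{lem:bnd_thet2_c} to get $\omega\|\btheta_\Pi\|_\eps \le (1-\gdivc)^{-1}\bigl(\omega\|\bPi\upc_0(\beeta)\|_\eps + \gdivc\tnorm{\btheta_0}\bigr)$, then bound $\tnorm{\btheta_0}^2 = \|\ROTZ\btheta_0\|_\bnu^2 + \omega^2\|\btheta_0\|_\eps^2$ by combining Lemma~\ref{lem:bnd_rot_theta_c} with Lemma~\ref{lem:bnd_thet1_c}; note that Lemma~\ref{lem:bnd_rot_theta_c} still contains an $\omega^2\|\btheta_\Pi\|_\eps^2$ term, so I would substitute the bound on $\omega\|\btheta_\Pi\|_\eps$ into it (this is where $\gdivc\le1$, hence $(1-\gdivc)^{-1}$-type factors being bounded by, say, $2$ when $\gdivc\le\tfrac12$, becomes relevant—actually one must be a little careful, since the theorem only assumes $\gdivc\le1$; I expect one uses elementary inequalities like $\gdivc^2\le\gdivc$ and absorbs borderline terms, accepting possibly loose constants). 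After this substitution one obtains $\tnorm{\btheta_0}^2 \le (\text{const})\tnorm{\beeta}^2 + (\text{linear in }\gdivc,\gpc^2)\tnorm{\be}^2$, and similarly for $\omega^2\|\btheta_\Pi\|_\eps^2$. Adding the two and using $\tnorm{\beeta}\le\tnorm{(I-\bPi\upc_0)(\beeta)} + \omega\|\bPi\upc_0(\beeta)\|_\eps$ (or rather $\tnorm{\beeta}^2 = \tnorm{(I-\bPi\upc_0)(\beeta)}^2+\omega^2\|\bPi\upc_0(\beeta)\|_\eps^2$, since $\bPi\upc_0$ is $\bL^2_\eps$-orthogonal and commutes appropriately with the norm split) yields $\tnorm{\be}^2 \le \tnorm{\beeta}^2 + (15\gdivc+4\gpc^2)\tnorm{\be}^2$ after the coefficients are collected, which rearranges to the claimed estimate $(1-15\gdivc-4\gpc^2)\tnorm{\be}\le\tnorm{\beeta}$. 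Finally, $\tnorm{\beeta} = \tnorm{\bE-\bestc(\bE)} \le \min_{\bv_h}\tnorm{\bE-\bv_h}$ because $\bestc(\bE)$ is built as a $b^+$-projection and $b^+$ is the $\tnorm{\SCAL}$-inner product, giving $\tnorm{\beeta}=\min_{\bv_h\in\bV\upc_{h0}}\tnorm{\bE-\bv_h}$—and since $\bV\upc_{h0}\subset\bV_{h0}\upc$ one must check the min over the larger space $\bV_{h0}\upc$ is what appears; actually $\bestc$ maps into $\bV\upc_{h0}$, and the best approximation in $\tnorm{\SCAL}$ over that subspace dominates the min over $\bV_{h0}\upc$... here one should note $\bestc(\bE)\in\bV_{h0}\upc$ so $\tnorm{\beeta}\ge\min_{\bv_h\in\bV_{h0}\upc}\tnorm{\bE-\bv_h}$ directly, which is the correct direction. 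For well-posedness: if $15\gdivc+4\gpc^2<1$ then $\bJ=\bzero$ forces $\bE=\bzero$ hence $\tnorm{\be}=0$, so the homogeneous discrete problem has only the trivial solution; since it is a square linear system, existence and uniqueness follow.

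\textbf{Main obstacle.} The genuine difficulty is not conceptual but the constant-tracking: Lemma~\ref{lem:bnd_rot_theta_c} and Lemma~\ref{lem:bnd_thet2_c} are coupled (each bounds a quantity in terms of the other), so one must resolve the $2\times2$ "system" in $\tnorm{\btheta_0}^2$ and $\omega^2\|\btheta_\Pi\|_\eps^2$ carefully, repeatedly using $\gdivc\le1$ to bound products like $\gdivc/(1-\gdivc)$ and $\gdivc^2$ crudely, and being attentive that cross terms handled by Young's inequality do not blow up the coefficient past the advertised $15\gdivc+4\gpc^2$. I would expect the bulk of the proof to be a few lines of substitution followed by a short sequence of elementary inequalities; the only place to slip is in an over-optimistic or over-pessimistic collection of the $\gdivc$-coefficients, so I would double-check the arithmetic leading to the numbers $15$ and $4$.
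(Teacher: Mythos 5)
Your plan is essentially the paper's proof: decompose $\tnorm{\be}^2 = \omega^2\|\btheta_0\|_\eps^2 + \omega^2\|\btheta_\Pi\|_\eps^2 + \|\ROTZ\btheta_0\|_\bnu^2$, feed Lemmas~\ref{lem:bnd_thet1_c}, \ref{lem:bnd_thet2_c}, \ref{lem:bnd_rot_theta_c} into each other, absorb $\tnorm{\be}^2$-terms, and close with $\tnorm{\beeta}=\min_{\bv_h\in\bV_{h0}\upc}\tnorm{\bE-\bv_h}$ (since $\bestc$ is the $b^+$-orthogonal, i.e.\ $\tnorm{\SCAL}$-orthogonal, projection onto $\bV_{h0}\upc$; the parenthetical remark about ``$\ge$'' is a red herring, it is the equality you need). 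What you leave unresolved is exactly the content: the actual arithmetic producing $15\gdivc+4\gpc^2$. Two technical devices in the paper are worth noting because they answer your worries directly. First, you feared having to assume $\gdivc\le\tfrac12$ to control $(1-\gdivc)^{-1}$; the paper never divides by $1-\gdivc$. Instead it multiplies the Lemma~\ref{lem:bnd_rot_theta_c} bound by $(1-\gdivc)^2\le1$ and keeps $(1-\gdivc)^2$ on the left of every estimate, invoking $(1-\gdivc)^2\ge1-2\gdivc$ only at the very end; thus $\gdivc\le1$ really suffices. Second, the ``$2\times2$ system'' you flagged between $\|\ROTZ\btheta_0\|_\bnu^2$ and $\omega^2\|\btheta_\Pi\|_\eps^2$ is decoupled triangularly: squaring the $\btheta_\Pi$ bound gives both the sharp estimate $(1-\gdivc)^2\omega^2\|\btheta_\Pi\|_\eps^2\le\omega^2\|\bPi\upc_0(\beeta)\|_\eps^2+3\gdivc\tnorm{\be}^2$ and the crude one $(1-\gdivc)^2\omega^2\|\btheta_\Pi\|_\eps^2\le4\tnorm{\be}^2$; the crude form is what gets substituted into the $\ROTZ\btheta_0$ bound, avoiding circularity. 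Finally, the estimate is first proved in squared form, $(1-15\gdivc-4\gpc^2)\tnorm{\be}^2\le\tnorm{\beeta}^2$, and passing to unsquared uses $\sqrt{x}\ge x$ for $x\in[0,1]$ (trivial if the prefactor is nonpositive). So: right route, but the proposal is a map of the terrain rather than the hike itself.
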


\begin{proof}
(1) In this first step, we establish some preliminary bounds.
Since $\tnorm{\btheta_0}\le \tnorm{\be}$ (because $\tnorm{\be}^2=\tnorm{\btheta_0}^2 +
\omega^2\|\btheta_\Pi\|_\eps^2$), 
the estimate \eqref{eq:bnd_thet2_c} implies that
\begin{equation} \label{eq:bnd_theta22a_c}
(1-\gdivc) \omega \|\btheta_\Pi\|_\eps \le \omega \|\bPi\upc_0(\beeta)\|_\eps 
+ \gdivc\tnorm{\be}.
\end{equation}
Moreover, we have
\[
\omega \|\bPi\upc_0(\beeta)\|_\eps \le \omega \|\beeta\|_\eps \le \tnorm{\beeta}
\le \tnorm{\bE-\bE_h} = \tnorm{\be}.
\]
Squaring~\eqref{eq:bnd_theta22a_c} (recall that $\gdivc\le 1$ by assumption) and using 
the above bound in the double product, we obtain
\begin{align}
(1-\gdivc)^2 \omega^2 \|\btheta_\Pi\|_\eps^2 \le {}& \omega^2 \|\bPi\upc_0(\beeta)\|_\eps^2 
+ (2\gdivc+\gdivc^2)\tnorm{\be}^2 \nonumber \\
\le {}& \omega^2 \|\bPi\upc_0(\beeta)\|_\eps^2 + 3\gdivc\tnorm{\be}^2, \label{eq:bnd_prep}
\end{align}
where the last bound follows from $\gdivc\le1$. Since $\omega \|\bPi\upc_0(\beeta)\|_\eps \le \tnorm{\be}$ and
$\gdivc\le1$, \eqref{eq:bnd_prep} implies that 
\begin{equation} \label{eq:bnd_prepp}
(1-\gdivc)^2 \omega^2 \|\btheta_\Pi\|_\eps^2 \le 4 \tnorm{\be}^2.
\end{equation}

(2) We are now ready to prove~\eqref{eq:opt_err_c}.
Multiplying the estimate~\eqref{eq:bnd_rot_theta_c} from Lemma~\ref{lem:bnd_rot_theta_c} 
by $(1-\gdivc)^2$ (which is $\le1$) and using~\eqref{eq:bnd_prepp} gives 
\begin{align}
(1-\gdivc)^2\|\ROTZ\btheta_0\|_\bnu^2 
\le {}& \tnorm{(I-\bPi\upc_0)(\beeta)}^2 + (2\gdivc+3\gpc^2)\tnorm{\be}^2 + 2\gdivc(1-\gdivc)^2\omega^2\|\btheta_\Pi\|_\eps^2
\nonumber \\
\le {}&\tnorm{(I-\bPi\upc_0)(\beeta)}^2 + (10\gdivc+3\gpc^2)\tnorm{\be}^2. \label{eq:prelim_bnd_c}
\end{align}
Since $\tnorm{\be}^2 = \omega^2\|\btheta_0\|_\eps^2 + \omega^2\|\btheta_\Pi\|_\eps^2 + \|\ROTZ\btheta_0\|_\bnu^2$, we infer that
\begin{align}
(1-\gdivc)^2\tnorm{\be}^2 \le{}& \omega^2\|\btheta_0\|_\eps^2 + (1-\gdivc)^2\omega^2\|\btheta_\Pi\|_\eps^2 + (1-\gdivc)^2\|\ROTZ\btheta_0\|_\bnu^2 \nonumber \\
\le{}& \gpc^2 \tnorm{\be}^2 + \omega^2 \|\bPi\upc_0(\beeta)\|_\eps^2 + 3\gdivc\tnorm{\be}^2 \nonumber \\ 
&+\tnorm{(I-\bPi\upc_0)(\beeta)}^2 + (10\gdivc+3\gpc^2)\tnorm{\be}^2 \nonumber \\
= {}& \tnorm{\beeta}^2 + (13\gdivc+4\gpc^2)\tnorm{\be}^2, \label{eq:bnd_e_eta_e}
\end{align}
where the second bound follows from Lemma~\ref{lem:bnd_thet1_c}, \eqref{eq:bnd_prep}, and \eqref{eq:prelim_bnd_c}, and the last equality follows from $\tnorm{\beeta}^2=\tnorm{(I-\bPi\upc_0)(\beeta)}^2+\omega^2 \|\bPi\upc_0(\beeta)\|_\eps^2$.
The error estimate~\eqref{eq:opt_err_c} follows by observing that $(1-\gdivc)^2 \ge 1-2\gdivc$.

(3) If the mesh size is small enough so that
$15\gdivc + 4\gpc^2<1$,
the error estimate~\eqref{eq:opt_err_c} implies the
uniqueness of the discrete solution. Existence then follows from the fact that \eqref{eq:disc_pb_c} amounts to a square linear system.
\end{proof}

\begin{remark}[Asymptotic optimality]
Notice that in \eqref{eq:opt_err_c}, we have $\gdivc \to 0$ and $\gpc \to 0$
as $h\to0$ or $k\to\infty$. Hence, we have
\begin{equation*}
\tnorm{\bE-\bE_h}
\leq
(1+\theta(h)) \min_{\bv_h \in \bV_{h0}\upc}\tnorm{\bE-\bv_h},
\end{equation*}
with $\lim_{h/(k+1) \to 0} \theta(h) = 0$.
\end{remark}

\subsection{Inf-sup stability}

We are now ready to establish our main stability result.

\begin{theorem}[Inf-sup stablity]
\label{th:inf_sup_c}
We have
\begin{equation} \label{eq:inf_sup_c}
\min_{\substack{\bv_h \in \bV_{h0}\upc \\ \tnorm{\bv_h} = 1}}
\max_{\substack{\bw_h \in \bV_{h0}\upc \\ \tnorm{\bw_h} = 1}}
|b(\bv_h,\bw_h)|
\geq
\frac{1-2(\gdivc^2+\gpc)}{1 + 2\bst}.
\end{equation}
\end{theorem}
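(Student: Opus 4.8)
The plan is to mimic the structure of the lower-bound argument in Lemma~\ref{lem:infsup}, but carried out at the discrete level and tracking the perturbations introduced by the lack of $\bL^2_\eps$-orthogonality between curl-free and discretely-divergence-free pieces. Concretely, fix $\bv_h \in \bV_{h0}\upc$ and split it as $\bv_h = \bw_{h,0} + \bw_{h,\Pi}$ with $\bw_{h,\Pi} \eqq \bPi\upc_{h0}(\bv_h) \in \bV\upc_{h0}(\ceqz)$ and $\bw_{h,0}\eqq (I-\bPi\upc_{h0})(\bv_h) \in \bX\upc_{h0}$. In the continuous proof the test function was $\bv_0 + 2\bxi_0 - \bv_\Pi$, where $\bxi_0$ solved the adjoint problem with data $\omega^2\bw_{h,0}$. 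Here the difficulty is that $\bw_{h,0}$ need \emph{not} lie in $\Hrotzrotz^\perp$, only in $\bV\upc_{h0}(\ceqz)^\perp$, so I first decompose $\bw_{h,0} = (I-\bPi\upc_0)(\bw_{h,0}) + \bPi\upc_0(\bw_{h,0})$; the second piece has $\bL^2_\eps$-norm controlled by $\gdivc\,\omega^{-1}\|\ROTZ\bw_{h,0}\|_\bnu \le \gdivc\,\omega^{-1}\tnorm{\bv_h}$ via the divergence conformity factor~\eqref{eq_gamma_bX}, so it is a genuinely small term. I then take for the adjoint data $\btheta \eqq (I-\bPi\upc_0)(\bw_{h,0}) \in \Hrotzrotz^\perp$, solve~\eqref{eq:adjoint} to get $\bzeta_\btheta \in \bX\upc_0$, and replace $\bzeta_\btheta$ by its best $\bV\upc_{h0}$-approximation, the error of which is controlled by $\gpc\,\omega^{-1}\|\btheta\|_\eps \le \gpc\,\omega^{-1}\tnorm{\bv_h}$ through~\eqref{eq:def_gamma_c}.

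The discrete test function I would build is, in analogy with the continuous one,
\[
\bz_h \eqq \bw_{h,0} + 2\bzeta_{\btheta,h} - \bw_{h,\Pi},
\]
where $\bzeta_{\btheta,h}\in \bV\upc_{h0}$ is the best-approximation of $\bzeta_\btheta$. Evaluating $b(\bv_h,\bz_h)$, the key identities from the proof of Lemma~\ref{lem:infsup} give, up to the perturbation terms, $b(\bv_h,\bw_{h,0}-\bw_{h,\Pi}) = \tnorm{\bv_h}^2 - 2\omega^2\|\bw_{h,0}\|_\eps^2$ (using symmetry of $b$ and $\ROTZ\bw_{h,\Pi}=\bzero$) and $2b(\bv_h,\bzeta_{\btheta,h}) = 2\omega^2(\bv_h,\btheta)_\eps + 2b(\bv_h,\bzeta_{\btheta,h}-\bzeta_\btheta)$, with $b(\bv_h,\bzeta_\btheta) = \omega^2(\bv_h,\btheta)_\eps$ by~\eqref{eq:adjoint}. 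Now $(\bv_h,\btheta)_\eps = (\bw_{h,0},(I-\bPi\upc_0)(\bw_{h,0}))_\eps = \|(I-\bPi\upc_0)(\bw_{h,0})\|_\eps^2$ since $\bw_{h,\Pi}\in\Hrotzrotz$ is $\bL^2_\eps$-orthogonal to $\Hrotzrotz^\perp$, whereas in the clean case this was $\|\bw_{h,0}\|_\eps^2$; the discrepancy $\omega^2\|\bPi\upc_0(\bw_{h,0})\|_\eps^2 \le \gdivc^2\tnorm{\bv_h}^2$ is absorbed. Collecting everything, one gets $b(\bv_h,\bz_h) \ge \tnorm{\bv_h}^2 - C(\gdivc^2+\gpc)\tnorm{\bv_h}^2$ for the appropriate constant, matching the $1-2(\gdivc^2+\gpc)$ in~\eqref{eq:inf_sup_c}; the consistency checks I will need are that the residual terms (from $2b(\bv_h,\bzeta_{\btheta,h}-\bzeta_\btheta)$ bounded by $2\gpc\tnorm{\bv_h}^2$, and from replacing $\|\bw_{h,0}\|_\eps^2$ by $\|(I-\bPi\upc_0)(\bw_{h,0})\|_\eps^2$) really do combine into exactly $2(\gdivc^2+\gpc)$ and not something larger.

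For the denominator, I would bound $\tnorm{\bz_h}$ exactly as in Lemma~\ref{lem:infsup}: since $\bw_{h,0}+2\bzeta_{\btheta,h}$ and $\bw_{h,\Pi}$ are $\bL^2_\eps$-orthogonal \emph{and} $\bw_{h,\Pi}$ is curl-free, $\tnorm{\bz_h}^2 = \tnorm{\bw_{h,0}+2\bzeta_{\btheta,h}}^2 + \omega^2\|\bw_{h,\Pi}\|_\eps^2$, and then $\tnorm{\bw_{h,0}+2\bzeta_{\btheta,h}} \le \tnorm{\bw_{h,0}} + 2\tnorm{\bzeta_{\btheta,h}}$. Here I must control $\tnorm{\bzeta_{\btheta,h}} \le \tnorm{\bzeta_\btheta} \le \bst\,\omega\|\btheta\|_\eps \le \bst\,\tnorm{\bw_{h,0}}$, where the first inequality is stability of the $b^+$-best-approximation (Lemma~\ref{lem:best}) and the second is the definition~\eqref{eq:def_bst} of $\bst$ together with $\btheta\in\Hrotzrotz^\perp$ and $\|\btheta\|_\eps\le\|\bw_{h,0}\|_\eps$. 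This yields $\tnorm{\bw_{h,0}+2\bzeta_{\btheta,h}}\le(1+2\bst)\tnorm{\bw_{h,0}}$, hence $\tnorm{\bz_h}^2 \le (1+2\bst)^2\tnorm{\bw_{h,0}}^2 + \omega^2\|\bw_{h,\Pi}\|_\eps^2 \le (1+2\bst)^2\tnorm{\bv_h}^2$, exactly as at the continuous level. Dividing, $\max_{\bw_h}|b(\bv_h,\bw_h)|/\tnorm{\bw_h} \ge b(\bv_h,\bz_h)/\tnorm{\bz_h} \ge (1-2(\gdivc^2+\gpc))\tnorm{\bv_h}/(1+2\bst)$, and since $\bv_h$ was arbitrary this is~\eqref{eq:inf_sup_c}. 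The main obstacle is the first paragraph's task: organizing the perturbation bookkeeping so the two error sources (approximation factor $\gpc$ from discretizing $\bzeta_\btheta$, and divergence conformity $\gdivc$ from $\bw_{h,0}\notin\Hrotzrotz^\perp$) recombine into precisely the stated constant rather than a looser one; everything else is a faithful discrete transcription of Lemma~\ref{lem:infsup}.
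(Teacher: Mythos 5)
Your proposal is correct and follows essentially the same route as the paper: identical decomposition $\bv_h=\bw_{h,0}+\bw_{h,\Pi}$ via $\bPi\upc_{h0}$, the further split of $\bw_{h,0}$ via $\bPi\upc_0$ (paper's $\bphi_0,\bphi_\Pi$), the adjoint lift $\bzeta_\btheta$ replaced by $\bestc(\bzeta_\btheta)$, the same test function $\bw_{h,0}+2\bzeta_{\btheta,h}-\bw_{h,\Pi}$, and the same bookkeeping yielding $2(\gdivc^2+\gpc)$ in the numerator and $(1+2\bst)^2$ in the denominator. One small notational slip: you cite \eqref{eq:adjoint}, whose right-hand side has no $\omega^2$, yet your identities ($b(\bv_h,\bzeta_\btheta)=\omega^2(\bv_h,\btheta)_\eps$, $\tnorm{\bzeta_\btheta}\le\bst\omega\|\btheta\|_\eps$) and the residual bound $2\gpc\tnorm{\bv_h}^2$ require the adjoint to be posed with data $\omega^2\btheta$ (which is exactly what the paper does inside the proof), so the argument is sound once that scaling is made explicit.
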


\begin{proof}
We adapt to the discrete setting the arguments of the proof of Lemma~\ref{lem:infsup}.  
Let $\bv_h \in \bV_{h0}\upc$ and set $\bv_h=\bv_{h0}+\bv_{h\Pi}$ with
$\bv_{h0}\eqq (I-\bPi\upc_{h0})(\bv_h) \in \bX\upc_{h0}$
and $\bv_{h\Pi}\eqq \bPi\upc_{h0}(\bv_h)$.

(1) In this first step, we gain control on $\omega\|\bv_{h0}\|_\eps$.
Since $\bv_{h0}$ is (loosely speaking)
discretely divergence-free, but not pointwise divergence-free,
we need to consider a further decomposition of $\bv_{h0}$. 
Let us set $\bv_{h0}=\bphi_0+\bphi_\Pi$ with
$\bphi_0\eqq (I-\bPi\upc_0)(\bv_{h0})$ and $\bphi_\Pi\eqq \bPi\upc_0(\bv_{h0})$. 
Notice that
\[
\omega\|\bphi_0\|_\eps \le \omega\|\bv_{h0}\|_\eps \le \omega\|\bv_{h}\|_\eps \le \tnorm{\bv_h}.
\]
Let $\bxi_0\in \bX\upc_0$ be the unique adjoint solution such that
$b(\bw,\bxi_0)=\omega^2(\bw,\bphi_0)_\eps$ for all $\bw\in \bV_0\upc$ (notice that $\bphi_0
\in\Hrotzrotz^\perp$). We have
\[
\tnorm{\bxi_0} \le \bst \omega\|\bphi_0\|_\eps\le \bst \omega \|\bv_{h0}\|_\eps.
\]
Let us set $\bxi_{h0}\eqq \bestc(\bxi_0)$. Then $\bxi_{h0}\in \bV_{h0}\upc$ by definition,
and $\bxi_{h0}\in \bV\upc_{h0}(\ceqz)^\perp$ by~\eqref{eq:useful_pty}
since $\bxi_0\in \Hrotzrotz^\perp$. 
Moreover, owing to~\eqref{eq:stab_best}, we have 
$\tnorm{\bxi_{h0}} \le \tnorm{\bxi_0}$. Using these properties, we infer that
\begin{align*}
b(\bv_h,\bxi_{h0}) &= b(\bv_{h0},\bxi_{h0})+b(\bv_{h\Pi},\bxi_{h0}) = b(\bv_{h0},\bxi_{h0}) \\
&=b(\bv_{h0},\bxi_{0})+b(\bv_{h0},\bxi_{h0}-\bxi_0) \\
&\ge \omega^2\|\bphi_0\|_\eps^2 - \gpc \tnorm{\bv_h}^2,
\end{align*} 
since $b(\bv_{h0},\bxi_{0})=\omega^2(\bv_{h0},\bphi_0)_\eps=\omega^2\|\bphi_0\|_\eps^2$ and
\[
|b(\bv_{h0},\bxi_{h0}-\bxi_0)| \le \tnorm{\bv_{h0}}\, \tnorm{\bxi_{h0}-\bxi_0} \le
\tnorm{\bv_{h0}}\, \gpc \omega\|\bphi_0\|_\eps \le \gpc\tnorm{\bv_h}^2,
\]
owing to the boundedness of the bilinear form $b$, the above bound on $\omega\|\bphi_0\|_\eps$, and since $\tnorm{\bv_{h0}}\le\tnorm{\bv_h}$. Moreover, using the divergence conformity factor,
we infer that
\[
\omega\|\bphi_\Pi\|_\eps^2 = \omega^2 \|\bPi\upc_0(\bv_{h0})\|_\eps^2 \le \gdivc^2\|\ROTZ\bv_{h0}\|_\bnu^2 \le \gdivc^2 \tnorm{\bv_h}^2.
\]
Since $\|\bv_{h0}\|_\eps^2=\|\bphi_0\|_\eps^2+\|\bphi_\Pi\|_\eps^2$, putting everything together gives
\begin{equation} \label{eq:lower_infsup_c}
b(\bv_h,\bxi_{h0}) \ge \omega^2\|\bv_{h0}\|_\eps^2 - (\gdivc^2+\gpc)\tnorm{\bv_h}^2.
\end{equation}

(2) Since $b(\bv_h,\bv_{h0}-\bv_{h\Pi})=\tnorm{\bv_h}^2 - 2\omega^2\|\bv_{h0}\|_\eps^2$, using \eqref{eq:lower_infsup_c} yields
\[
b(\bv_h,\bv_{h0}+2\bxi_{h0}-\bv_{h\Pi}) \ge \tnorm{\bv_h}^2 -2(\gdivc^2+\gpc)\tnorm{\bv_h}^2.
\]
Moreover, using the same arguments as in the proof of Lemma~\ref{lem:infsup} and recalling that $\tnorm{\bxi_{h0}}\le \tnorm{\bxi}$, we obtain
\[
\tnorm{\bv_{h0}+2\bxi_{h0}-\bv_{h\Pi}}^2 = \tnorm{\bv_{h0}+2\bxi_{h0}}^2 + \tnorm{\bv_{h\Pi}}^2 \le (1+2\bst)^2 \tnorm{\bv_h}^2.
\] 
Since $\bv_{h0}+2\bxi_{h0}-\bv_{h\Pi}\in \bV\upc_{h0}$, this concludes the proof.
\end{proof}

\begin{remark}[Discrete inf-sup constant]
\label{rem:disc_infsup}
Since $\gpc$ and $\gdivc$ tend to zero as the mesh size is small enough
and/or the polynomial degree is large enough, the discrete inf-sup constant
appearing on the left-hand side of~\eqref{eq:inf_sup_c} tends to $(1+2\bst)^{-1}$.
Recall from Remark~\ref{rem:exact_infsup} that this quantity corresponds, up to
a factor of two at most, to the inf-sup constant of the bilinear form $b$ in the
continuous setting.
\end{remark}

\section{Bound on approximation and divergence conformity factors}
\label{sec:bnd_app_fac}

In this section, we bound the two (nondimensional) quantities 
introduced in Section~\ref{sec:def_factors_c} and used in Section~\ref{sec:conforming}: 
the approximation factor $\gpc$
and the divergence conformity factor $\gdivc$.
To this purpose, we consider the commuting quasi-interpolation operator 
$\calJ\upc_{h0}: \bL^2(\Dom) \to \bV\upc_{h0}$ 
and $\calJ\upd_{h0}: \bL^2(\Dom) \to \bV\upd_{h0}$ (the Raviart--Thomas
finite element space of order $k\ge0$ satisfying zero normal boundary conditions); see
\cite{Schoberl:01,ArnFW:06,Christiansen:07,ChrWi:08} and 
also \cite[Chap.~22-23]{EG_volI}. Both operators are bounded
in $\bL^2(\Dom)$, they are projections, and they satisfy the commuting 
property $\ROT(\calJ\upc_{h0}(\bv))=\calJ\upd_{h0}(\ROT\bv)$ for all $\bv\in\bL^2(\Dom)$.

For positive real numbers $A$ and $B$, we abbreviate as $A\lesssim B$
the inequality $A\le CB$ with a generic constant $C$ whose value can change at each
occurrence as long as it is independent of the mesh size, the frequency parameter $\omega$,
and, whenever relevant, any function involved in the bound. The constant $C$ can depend on
the shape-regularity of the mesh and the polynomial degree $k$ as well as on the
domain $\Omega$ and on the coefficients $\eps$ and $\bnu$.

We introduce the notation
\begin{equation}
\epsmax
:=
\max_{\bx \in \Dom} \max_{\substack{\bu \in \mathbb R^d \\ |\bu| = 1}}
\max_{\substack{\bv \in \mathbb R^d \\ |\bv| = 1}}
\eps(\bx) \bu \cdot \bv
\qquad
\epsmin
:=
\min_{\bx \in \Dom} \min_{\substack{\bu \in \mathbb R^d \\ |\bu| = 1}}
\eps(\bx) \bu \cdot \bu
\end{equation}
and define $\numax$ and $\numin$ similarly. Then,
$\velmin = \sqrt{\numin/\epsmax}$ 
stands for the minimum wavespeed in the domain.

\subsection{Piecewise smooth coefficients}

For the sake of simplicity, we start by assuming that the
coefficients are piecewise smooth in $\Dom$. Then, the following
regularity results from \cite{costabel_dauge_nicaise_1999a,Jochmann_maxwell_1999,BoGuL:13}
will be useful: there exists $s \in (0,1]$
such that, for all $\bv \in \Hrotz$ with $\DIV(\eps\bv) = 0$ and all
$\bw \in \Hdivzdivz$ with $\bnu \bw \in \Hrot$, we have
$\bv,\bw \in \bH^s(\Dom)$ with the estimates
\begin{equation}
\label{eq:extra_regularity}
|\bv|_{\bH^s(\Dom)}
\lesssim
\ell_\Dom^{1-s}
\numin^{-\frac12}\|\ROTZ \bv\|_\bnu,
\qquad
|\bw|_{\bH^s(\Dom)}
\lesssim
\ell_\Dom^{1-s}
\frac{1}{\velmin} \numin^{-\frac12}
\|\ROT (\bnu \bw)\|_{\eps^{-1}}.
\end{equation}
If $\Dom$ is convex and $\eps$ and $\bnu$
are (globally) Lipschitz continuous, we can take $s = 1$.


\begin{lemma}[Bound on approximation factor]
\label{lemma_approximation_factor_c}
Let $\gpc$ be defined in~\eqref{eq:def_gamma_c}. The following holds:
\begin{equation} \label{eq:regularity_c}
\gpc
\lesssim
(1+\bst)\left (\frac{\omega\ell_\Dom}{\velmin}\right )^{1-s}
\left (\frac{\omega h}{\velmin}\right )^s,
\end{equation}
with the stability constant $\bst$ defined in~\eqref{eq:def_bst}.
\end{lemma}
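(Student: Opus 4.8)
The plan is to bound $\gpc$ by exhibiting, for each normalized $\btheta\in\Hrotzrotz^\perp$, a good discrete approximant of the adjoint solution $\bzeta_\btheta\in\bX\upc_0$, namely its quasi-interpolant $\calJ\upc_{h0}(\bzeta_\btheta)\in\bV\upc_{h0}$. First I would split the energy norm of the interpolation error as $\tnorm{\bzeta_\btheta-\calJ\upc_{h0}(\bzeta_\btheta)}^2 = \omega^2\|\eps^{\frac12}(\bzeta_\btheta-\calJ\upc_{h0}(\bzeta_\btheta))\|^2 + \|\bnu^{\frac12}\ROTZ(\bzeta_\btheta-\calJ\upc_{h0}(\bzeta_\btheta))\|^2$ and treat the two contributions separately. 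For the curl term, the commuting property gives $\ROTZ\calJ\upc_{h0}(\bzeta_\btheta)=\calJ\upd_{h0}(\ROTZ\bzeta_\btheta)$, so the curl error equals $(I-\calJ\upd_{h0})(\ROTZ\bzeta_\btheta)$; this I would bound by the standard $\bH^s$ interpolation estimate for the Raviart--Thomas quasi-interpolant applied to $\ROTZ\bzeta_\btheta$, for which the relevant regularity is the second estimate in \eqref{eq:extra_regularity} (with $\bw=\ROTZ\bzeta_\btheta\in\Hdivzdivz$, noting $\ROT(\bnu\ROTZ\bzeta_\btheta)$ is controlled by the adjoint equation). For the $\bL^2$ term, I would likewise use the $\bH^s$ estimate for $\calJ\upc_{h0}$ applied to $\bzeta_\btheta$ itself, invoking the first estimate in \eqref{eq:extra_regularity} since $\DIV(\eps\bzeta_\btheta)=0$.

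The next step is to convert these interpolation bounds, which involve $\|\ROTZ\bzeta_\btheta\|_\bnu$ and $\|\ROT(\bnu\ROTZ\bzeta_\btheta)\|_{\eps^{-1}}$, into a bound involving only $\|\btheta\|_\eps=1$ and the stability constant $\bst$. From the adjoint problem \eqref{eq:adjoint} with $\bw=\bzeta_\btheta$ one gets $b(\bzeta_\btheta,\bzeta_\btheta)=(\bzeta_\btheta,\btheta)_\eps$, hence $\tnorm{\bzeta_\btheta}^2 = 2\omega^2\|\bzeta_\btheta\|_\eps^2 + (\bzeta_\btheta,\btheta)_\eps$; combined with the defining bound $\omega\tnorm{\bzeta_\btheta}\le\bst\|\btheta\|_\eps$ (which applies because $\btheta\in\Hrotzrotz^\perp$) and a Cauchy--Schwarz/Young step, this yields $\omega\tnorm{\bzeta_\btheta}\lesssim\bst\|\btheta\|_\eps$ and in particular $\omega\|\ROTZ\bzeta_\btheta\|_\bnu\lesssim\bst$ and $\omega^2\|\bzeta_\btheta\|_\eps\lesssim\bst$. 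The factor $\|\ROT(\bnu\ROTZ\bzeta_\btheta)\|_{\eps^{-1}}$ is read off the adjoint equation itself: $\ROT(\bnu\ROTZ\bzeta_\btheta)=\omega^2\eps\bzeta_\btheta+\eps\btheta$ (the strong form of \eqref{eq:adjoint} testing against $\Hrotz$), so its $\eps^{-1}$-norm is $\lesssim\omega^2\|\bzeta_\btheta\|_\eps+\|\btheta\|_\eps\lesssim(1+\bst)$ after multiplying through by appropriate powers of $\omega$. Assembling all pieces, tracking the powers of $\omega$, $h$, $\ell_\Dom$ and the wavespeed $\velmin$, and using $h\lesssim\ell_\Dom$ to merge the $\bL^2$ and curl contributions into a single $(\omega h/\velmin)^s$ scaling, gives \eqref{eq:regularity_c}.

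The bookkeeping with the frequency $\omega$ and the length/speed scales is the main technical obstacle: the two terms in the energy norm scale differently in $\omega$, the interpolation estimates naturally produce a factor $h^s$ against an $\bH^s$-seminorm while \eqref{eq:extra_regularity} supplies that seminorm with its own $\ell_\Dom^{1-s}$ and $\velmin$ weights, and one must check that everything recombines cleanly into the claimed nondimensional group $(\omega\ell_\Dom/\velmin)^{1-s}(\omega h/\velmin)^s$ with the prefactor $(1+\bst)$ rather than, say, $\bst$ or $\bst^2$. The rest — the quasi-interpolation error estimates themselves and the $\bH^s$ regularity — is quoted from \cite[Chap.~22-23]{EG_volI} and \cite{costabel_dauge_nicaise_1999a,Jochmann_maxwell_1999,BoGuL:13} respectively, so no new analysis is needed there.
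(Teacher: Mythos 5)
Your plan reproduces the paper's proof step for step: you use $\calJ\upc_{h0}(\bzeta_\btheta)$ as the competitor, split the energy norm into its $\bL^2$ and curl parts, apply the two regularity estimates of \eqref{eq:extra_regularity} to $\bzeta_\btheta$ and to $\ROTZ\bzeta_\btheta$, use the commuting property to push the interpolation error of the curl onto $\calJ\upd_{h0}$, and control $\|\ROTZ\bzeta_\btheta\|_\bnu$ and $\|\ROT(\bnu\ROTZ\bzeta_\btheta)\|_{\eps^{-1}}$ via the definition of $\bst$ (with the symmetry of $b$) and the strong form of the adjoint equation. Two small remarks: the bound $\omega\tnorm{\bzeta_\btheta}\le\bst\|\btheta\|_\eps$ already follows directly from \eqref{eq:def_bst} once you note $\bzeta_\btheta=\bv_\btheta$ by symmetry, so the intermediate identity $b(\bzeta_\btheta,\bzeta_\btheta)=(\bzeta_\btheta,\btheta)_\eps$ and the Young step are superfluous, and no $h\lesssim\ell_\Dom$ is needed since both contributions already come out with the same scaling $(\omega\ell_\Dom/\velmin)^{1-s}(\omega h/\velmin)^s$.
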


\begin{proof}
Let $\btheta \in \Hrotzrotz^\perp$ and let $\bzeta_\btheta \in \bX\upc_0$ solve the adjoint 
problem~\eqref{eq:adjoint}.
On the one hand, invoking~\eqref{eq:extra_regularity}, using the stability constant $\bst$,
we infer that
\[
|\bzeta_\btheta|_{\bH^s(\Dom)}
\lesssim
\ell_\Dom^{1-s} \numin^{-\frac12}\|\ROTZ \bzeta_\btheta\|_\bnu
\le
\ell_\Dom^{1-s} \numin^{-\frac12}\tnorm{\bzeta_\btheta}
\lesssim
\bst \ell_\Dom^{1-s} \omega^{-1} \numin^{-\frac12}\|\btheta\|_\eps.
\]
Invoking the approximation properties of $\calJ\upc_{h0}$
leads to
\begin{align}
\omega^2 \|\bzeta_\btheta-\calJ\upc_{h0}(\bzeta_\btheta)\|_\eps
&\leq
\omega^2\epsmax^{\frac12}
\|\bzeta_\btheta-\calJ\upc_{h0}(\bzeta_\btheta)\|
\nonumber \\
& \lesssim
\omega^2
h^s \epsmax^{\frac12}|\bzeta_\btheta|_{\bH^s(\Dom)}
\lesssim
\bst
\left (\frac{\omega\ell_\Dom}{\velmin}\right )^{1-s}
\left (\frac{\omega h}{\velmin}\right )^s
\|\btheta\|_\eps. \label{tmp_gpc_l2}
\end{align}
On the other hand, we have
$\eps^{-1} \ROT(\bnu\ROTZ\bzeta_\btheta) = \btheta + \omega^2 \bzeta_\btheta$,
so that
\begin{equation*}
\|\ROT(\bnu\ROTZ\bzeta_\btheta)\|_{\eps^{-1}}
=
\|\eps^{-1} \ROT(\bnu\ROTZ\bzeta_\btheta)\|_{\eps}
\leq
\|\btheta\|_\eps + \omega^2 \|\bzeta_\btheta\|_\eps
\leq
(1+\bst)\|\btheta\|_\eps.
\end{equation*}
Since $\bw\eqq \ROTZ \bzeta_\btheta \in \Hdivzdivz$ with $\bnu\bw\in\Hrot$, 
we can again invoke \eqref{eq:extra_regularity}, giving
\begin{equation*}
|\ROTZ\bzeta_\btheta|_{\bH^s(\Dom)}
\lesssim
\ell_\Dom^{1-s} \frac{\numin^{-\frac12}}{\velmin}
\|\ROT(\bnu\ROTZ\bzeta_\btheta)\|_{\eps^{-1}}
\leq
(1+\bst) \ell_\Dom^{1-s} \frac{\numin^{-\frac12}}{\velmin} \|\btheta\|_\eps.
\end{equation*}
Owing to the commuting property $\ROTZ\calJ\upc_{h0}(\SCAL)=\calJ\upd_{h0}(\ROTZ\SCAL)$
where $\calJ\upd_{h0}$ is the commuting quasi-interpolation operator mapping onto the 
Raviart--Thomas finite element space with zero normal component on the boundary, 
we infer that
\begin{align}
\omega \|\ROTZ(\bzeta_\btheta-\calJ\upc_{h0}(\bzeta_\btheta))\|_\bnu
&= \omega \|\ROTZ\bzeta_\btheta-\calJ\upd_{h0}(\ROTZ\bzeta_\btheta)\|_\bnu \nonumber \\
&\lesssim
\omega \numax^{\frac12} h^s|\ROTZ\bzeta_\btheta|_{\bH^s(\Dom)}
\nonumber \\
&\lesssim
(1+\bst)
\left (\frac{\omega \ell_\Dom}{\velmin}\right )^{1-s}
\left (\frac{\omega h}{\velmin}\right )^{s}
\|\btheta\|_\eps. \label{tmp_gpc_rot}
\end{align}
(Recall that the ratio
$\numax/\numin$ can be hidden in the generic constant $C$.)
The conclusion follows from \eqref{tmp_gpc_l2} and \eqref{tmp_gpc_rot}.
\end{proof}

\begin{lemma}[Bound on divergence conformity factor] \label{lem:div_conf_conf}
\label{lemma_gdivc_smooth}
Let $\gdivc$ be defined in~\eqref{eq_gamma_bX}. The following holds:
\begin{equation}
\gdivc
\lesssim
\left (\frac{\omega\ell_\Dom}{\velmin}\right )^{1-s}
\left (\frac{\omega h}{\velmin}\right )^s.
\end{equation}
\end{lemma}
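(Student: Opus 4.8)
The plan is to estimate $\omega\|\bPi\upc_0(\bv_h)\|_\eps$ for a discretely divergence-free $\bv_h$ by the $\bL^2$ approximation error of a \emph{smooth} auxiliary field, exploiting the discrete divergence constraint through the commuting quasi-interpolation operator $\calJ\upc_{h0}$. Fix $\bv_h\in\bX\upc_{h0}$ with $\|\ROTZ\bv_h\|_\bnu=1$, and split it as $\bv_h=\bphi_0+\bphi_\Pi$ with $\bphi_\Pi\eqq\bPi\upc_0(\bv_h)\in\Hrotzrotz$ and $\bphi_0\eqq(I-\bPi\upc_0)(\bv_h)$. As a difference of $\Hrotz$-fields that is $\bL^2_\eps$-orthogonal to $\Hrotzrotz$, $\bphi_0$ lies in $\Hrotz\cap\Hrotzrotz^\perp=\bX\upc_0$, so $\DIV(\eps\bphi_0)=0$; moreover $\ROTZ\bphi_0=\ROTZ\bv_h$ since $\bphi_\Pi$ is curl-free. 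The regularity estimate~\eqref{eq:extra_regularity} then gives $\bphi_0\in\bH^s(\Dom)$ with $|\bphi_0|_{\bH^s(\Dom)}\lesssim\ell_\Dom^{1-s}\numin^{-\frac12}\|\ROTZ\bv_h\|_\bnu=\ell_\Dom^{1-s}\numin^{-\frac12}$; this is the only field whose smoothness is invoked.

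The heart of the argument is to show $\|\bphi_\Pi\|_\eps\le 2\|\bphi_\Pi-\calJ\upc_{h0}(\bphi_\Pi)\|_\eps$, and I would establish two facts. First, since $\bV\upc_{h0}(\ceqz)\subset\Hrotzrotz$ and $\bPi\upc_{h0}$ is the $\bL^2_\eps$-projection onto the smaller of these two spaces, nestedness of orthogonal projections gives $\bPi\upc_{h0}(\bphi_\Pi)=\bPi\upc_{h0}(\bPi\upc_0(\bv_h))=\bPi\upc_{h0}(\bv_h)=\bzero$, so $\bphi_\Pi$ is $\bL^2_\eps$-orthogonal to $\bV\upc_{h0}(\ceqz)$. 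Second, $\calJ\upc_{h0}(\bphi_\Pi)\in\bV\upc_{h0}(\ceqz)$, because it belongs to $\bV\upc_{h0}$ and, by the commuting property, $\ROTZ\calJ\upc_{h0}(\bphi_\Pi)=\calJ\upd_{h0}(\ROTZ\bphi_\Pi)=\bzero$. Combining the two yields $(\bphi_\Pi,\calJ\upc_{h0}(\bphi_\Pi))_\eps=0$, hence $\|\calJ\upc_{h0}(\bphi_\Pi)\|_\eps^2=-(\bphi_\Pi-\calJ\upc_{h0}(\bphi_\Pi),\calJ\upc_{h0}(\bphi_\Pi))_\eps\le\|\bphi_\Pi-\calJ\upc_{h0}(\bphi_\Pi)\|_\eps\|\calJ\upc_{h0}(\bphi_\Pi)\|_\eps$, so $\|\calJ\upc_{h0}(\bphi_\Pi)\|_\eps\le\|\bphi_\Pi-\calJ\upc_{h0}(\bphi_\Pi)\|_\eps$, and the triangle inequality gives the claimed inequality.

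Finally I would convert the right-hand side into an approximation error of $\bphi_0$. Since $\calJ\upc_{h0}$ is a projection onto $\bV\upc_{h0}$ and $\bv_h\in\bV\upc_{h0}$, applying it to $\bv_h=\bphi_0+\bphi_\Pi$ gives $\bv_h=\calJ\upc_{h0}(\bphi_0)+\calJ\upc_{h0}(\bphi_\Pi)$, hence $\bphi_\Pi-\calJ\upc_{h0}(\bphi_\Pi)=-(\bphi_0-\calJ\upc_{h0}(\bphi_0))$. Therefore $\|\bPi\upc_0(\bv_h)\|_\eps=\|\bphi_\Pi\|_\eps\le 2\|\bphi_0-\calJ\upc_{h0}(\bphi_0)\|_\eps\le 2\epsmax^{\frac12}\|\bphi_0-\calJ\upc_{h0}(\bphi_0)\|$, and the $\bL^2$ approximation property of $\calJ\upc_{h0}$ together with the bound on $|\bphi_0|_{\bH^s(\Dom)}$ gives $\|\bphi_0-\calJ\upc_{h0}(\bphi_0)\|\lesssim h^s|\bphi_0|_{\bH^s(\Dom)}\lesssim h^s\ell_\Dom^{1-s}\numin^{-\frac12}$. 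Multiplying by $\omega$, using $\epsmax^{\frac12}\numin^{-\frac12}=\velmin^{-1}$ and regrouping the powers of $\omega$, $h$ and $\ell_\Dom$, and taking the supremum over $\bv_h$ yields $\gdivc\lesssim\omega\,h^s\ell_\Dom^{1-s}\velmin^{-1}=(\omega\ell_\Dom/\velmin)^{1-s}(\omega h/\velmin)^s$. The main obstacle is precisely the inequality $\|\bphi_\Pi\|_\eps\le 2\|\bphi_\Pi-\calJ\upc_{h0}(\bphi_\Pi)\|_\eps$: a direct duality bound in the spirit of the estimate for $\gpc$ would only deliver the suboptimal exponent $h^{s/2}$, and it is here that the discrete divergence constraint on $\bv_h$, together with the fact that $\calJ\upc_{h0}$ maps curl-free fields to curl-free fields, is used to pin the nonconforming part $\bphi_\Pi$ to the quasi-interpolant of the smooth part $\bphi_0$.
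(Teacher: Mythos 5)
Your proof is correct and follows essentially the same route as the paper's: same decomposition $\bv_h=\bphi_0+\bphi_\Pi$ with $\bphi_0\in\bX\upc_0$, same use of the regularity shift~\eqref{eq:extra_regularity} on $\bphi_0$ only, same identification $\bphi_\Pi\in\bV\upc_{h0}(\ceqz)^\perp$ and $\calJ\upc_{h0}(\bphi_\Pi)\in\bV\upc_{h0}(\ceqz)$ via the commuting property, and the same identity $\bphi_\Pi-\calJ\upc_{h0}(\bphi_\Pi)=-(\bphi_0-\calJ\upc_{h0}(\bphi_0))$ from $\calJ\upc_{h0}(\bv_h)=\bv_h$. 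The only divergence is cosmetic: instead of your two-step bound (first $\|\calJ\upc_{h0}(\bphi_\Pi)\|_\eps\le\|\bphi_\Pi-\calJ\upc_{h0}(\bphi_\Pi)\|_\eps$, then the triangle inequality giving the factor $2$), the paper writes $\|\bphi_\Pi\|_\eps^2=(\bphi_\Pi,\bphi_\Pi-\calJ\upc_{h0}(\bphi_\Pi))_\eps=-(\bphi_\Pi,\bphi_0-\calJ\upc_{h0}(\bphi_0))_\eps$ directly from the orthogonality $(\bphi_\Pi,\calJ\upc_{h0}(\bphi_\Pi))_\eps=0$ and applies Cauchy--Schwarz once, yielding $\|\bphi_\Pi\|_\eps\le\|\bphi_0-\calJ\upc_{h0}(\bphi_0)\|_\eps$ with constant $1$; the factor $2$ you carry is harmless under $\lesssim$.
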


\begin{proof}
(1) Let $\bv_h \in \bX\upc_{h0}= \bV\upc_{h0}\cap \bV\upc_{h0}(\ceqz)^\perp$. Let us write
\begin{equation*}
\bv_h = \bw + \bPi\upc_0(\bv_h),
\end{equation*}
with $\bw \eqq (I-\bPi\upc_0)(\bv_h)$. By construction, 
$\bw\in \Hrotzrotz^\perp$, and we have $\bw\in \Hrotz$ since $\bv_h\in \bV\upc_{h0}
\subset \Hrotz$; hence, $\bw\in \bX\upc_0$. Invoking~\eqref{eq:extra_regularity}
and observing that $\ROTZ \bw = \ROTZ \bv_h$, we infer that
\[
|\bw|_{\bH^s(\Dom)} \lesssim \ell_\Dom^{1-s} \numin^{-\frac12} \|\ROTZ\bv_h\|_\bnu.
\]
Moreover, we have $\bPi\upc_{h0}(\bPi\upc_0(\bv_h))
= \bPi\upc_{h0}(\bv_h) = \bzero$ since $\bv_h\in \bV\upc_{h0}(\ceqz)^\perp$;
hence, $\bPi\upc_0(\bv_h)\in \bV\upc_{h0}(\ceqz)^\perp$ as well.

(2) Recall the commuting quasi-interpolation operator 
$\calJ\upc_{h0}: \bL^2(\Dom) \to \bV\upc_{h0}$. 
Since $\calJ\upc_{h0}$ leaves $\bV\upc_{h0}$ pointwise invariant, we have
$(I-\calJ\upc_{h0})(\bv_h)=\bzero$, so that
\begin{equation*}
(I-\calJ\upc_{h0})(\bPi\upc_0(\bv_h)) = -(I - \calJ\upc_{h0})(\bw).
\end{equation*}
Moreover, since $\bPi\upc_0(\bv_h)$ is curl-free by construction, the commuting property
of $\calJ\upc_{h0}$ implies that 
\[
\calJ\upc_{h0}(\bPi\upc_0(\bv_h)) \in 
\bV\upc_{h0}(\ceqz).
\] 
Recalling that $\bPi\upc_0(\bv_h)\in 
\bV\upc_{h0}(\ceqz)^\perp$, we infer that
\begin{align*}
\|\bPi\upc_0(\bv_h)\|_\eps^2 = (\bPi\upc_0(\bv_h),\bPi\upc_0(\bv_h))_\eps
&= (\bPi\upc_0(\bv_h),\bPi\upc_0(\bv_h)-\calJ\upc_{h0}(\bPi\upc_0(\bv_h)))_\eps \\
&= -(\bPi\upc_0(\bv_h),\bw-\calJ\upc_{h0}(\bw))_\eps.
\end{align*}
The Cauchy--Schwarz inequality together with the approximation properties of 
$\calJ\upc_{h0}$ gives
\[
\|\bPi\upc_0(\bv_h)\|_\eps \lesssim h^s \epsmax^{\frac12}|\bw|_{\bH^s(\Dom)},
\]
and we conclude using the above bound on $|\bw|_{\bH^s(\Dom)}$.
\end{proof}

\begin{remark}[Bound on $\gdivc$]
The above proof can be rewritten as the following statement:
\begin{equation} \label{eq:gamma_calJ}
\gdivc \le 
\gamma_\calJ := \sup_{\substack{\bw \in \bX\upc_0 \\ \|\ROTZ \bw\|_\bnu = 1}}
\omega \|\bw-\calJ\upc_{h0}(\bw)\|_\eps \lesssim 
\left (\frac{\omega\ell_\Dom}{\velmin}\right )^{1-s}
\left (\frac{\omega h}{\velmin}\right )^s.
\end{equation}
This shows that $\gdivc$ is bounded by an approximation factor on $\bX\upc_0$
using the commuting quasi-interpolation operator $\calJ\upc_{h0}$.
Notice that only the rightmost bound uses~\eqref{eq:extra_regularity}.
\end{remark}

\begin{remark}[Convex domain]
For a convex domain $\Dom$, the factors are bounded as
\begin{equation*}
\gpc
\lesssim
(1+\bst)
\frac{\omega h}{\velmin},
\qquad
\gdivc
\lesssim
\frac{\omega h}{\velmin}.
\end{equation*}
The quantity $(\omega h)/\velmin$ is inversly proportinal to the
(minimal) number of mesh elements per wavelenth. It is therefore reasonable
to assume that $\gdivc \lesssim 1$. We also see that
$\gpc$ is typically not bounded for all frequencies assuming a
constant number of elements per wavelength, since $\bst$
can be large. This is the standard manifestation of dispersion errors,
also known in this context as pollution effect. This is completely
standard, and also happens in the (simpler) case of Helmholtz problems.
It is interesting to notice that the constraint that $\gdivc$ is small,
which is specific to Maxwell's equations, is less restrictive than
the constraint that $\gpc$ is small, which is common to Maxwell and
Helmholtz equations.
\end{remark}


\begin{remark}[Reduced dispersion for high-order elements]
When the domain $\Dom$ and the coefficients are smooth, it is shown in \cite{ChaVe:22} that
\begin{equation*}
\gpc
\lesssim
\frac{\omega h}{\velmin} + (1+\bst) \left (\frac{\omega h}{\velmin}\right )^{(k+1)},
\end{equation*}
so that $\gpc$ is small if
\begin{equation}
\label{eq_dof_reduction}
\frac{\omega h}{\velmin}
\lesssim
\bst^{-1/(k+1)}.
\end{equation}
For large frequencies (or frequencies close to resonant frequencies),
$\bst$ becomes large, so that the number of elements per wavelength
needs to be increased. Nevertheless, \eqref{eq_dof_reduction} expresses
that the required increase is less important for higher order elements,
which corresponds to numerical observations. It is also expected
that such a result remains true for general domains and piecewise smooth
coefficients if the mesh is suitably refined locally, but this
claim has only been established for two-dimensional problems in
\cite{chaumontfrelet_nicaise_2020a}. We finally refer the reader to
\cite{melenk_sauter_2021} where stronger results explicit in the polynomial
degree $k$ are established, but under stronger assumptions on the domain.
\end{remark}

\begin{remark}[$k$ convergence]
When $s=1$, we can consider the interpolation operators
from \cite{MelRo:20}, instead of the quasi-interpolation
operators $\calJ\upc_{h0}$ and $\calJ\upd_{h0}$ considered above,
thereby showing that $\gpc$ and $\gdivc$ tend to zero (and optimally so)
as $k$ is increased (and $h$ is fixed). More generally, a similar strategy
can be employed as long as a piecewise version of the regularity regularity shift
in \eqref{eq:extra_regularity} holds true with $s > 1/2$. In such a case, we can use,
e.g., the interpolation operators from \cite{DeBuffa05}.
\end{remark}

\subsection{General coefficients}

Here, we consider general coefficients $\eps$ and $\bnu$ for
which \eqref{eq:extra_regularity} may not hold for any $s > 0$.

\begin{lemma}[Convergence of approximation factor]
We have $\gpc \to 0$ as $h \to 0$.
\end{lemma}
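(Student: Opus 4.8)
The plan is to establish convergence of $\gpc$ to zero as $h \to 0$ \emph{without} invoking the regularity shift~\eqref{eq:extra_regularity}, relying instead on a compactness argument combined with the density of finite element spaces. Recall that $\gpc = \sup_{\btheta} \min_{\bv_h} \omega\tnorm{\bzeta_\btheta - \bv_h}$, where the supremum is over $\btheta \in \Hrotzrotz^\perp$ with $\|\btheta\|_\eps = 1$ and $\bzeta_\btheta \in \bX\upc_0$ solves the adjoint problem~\eqref{eq:adjoint}. The key structural facts are: (i) the solution map $\btheta \mapsto \bzeta_\btheta$ is bounded from $\Ldeuxd$ into $\Hrot$ (indeed into $\bX\upc_0$), with $\omega\tnorm{\bzeta_\btheta} \le \bst\|\btheta\|_\eps$; and (ii) this map is in fact \emph{compact} from $\Ldeuxd$ into $\Hrot$ equipped with the energy norm. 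Fact (ii) follows because the adjoint problem, written as $-\omega^2\bzeta_\btheta + \ROT(\bnu\ROTZ\bzeta_\btheta) = \btheta$ with the divergence constraint $\bzeta_\btheta \in \bX\upc_0$, has its right-hand side in the compactly embedded space; more precisely, if $\btheta_n \rightharpoonup \btheta$ weakly in $\Ldeuxd$, then $\bzeta_{\btheta_n}$ is bounded in $\bX\upc_0 \hookrightarrow\hookrightarrow \Ldeuxd$, so $\bzeta_{\btheta_n} \to \bzeta$ strongly in $\Ldeuxd$ along a subsequence, and then testing the equation against $\bzeta_{\btheta_n} - \bzeta_{\btheta_m}$ upgrades this to strong convergence of $\ROTZ\bzeta_{\btheta_n}$ in $\Ldeuxd$, hence strong convergence in the energy norm.

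Granting the compactness, I would argue as follows. Let $K \eqq \{\bzeta_\btheta : \btheta \in \Hrotzrotz^\perp,\ \|\btheta\|_\eps = 1\}$; by compactness of the solution map and boundedness of the unit sphere, the closure $\overline{K}$ is a compact subset of $(\Hrot, \tnorm{\SCAL})$. The quantity $\omega^{-1}\gpc$ is then the supremum over $\bzeta \in \overline{K}$ of $\operatorname{dist}_{\tnorm{\SCAL}}(\bzeta, \bV\upc_{h0})$. Now invoke the density of finite element spaces: since $\calJ\upc_{h0}$ commutes with the curl and both $\calJ\upc_{h0}$ and $\calJ\upd_{h0}$ converge pointwise to the identity on $\Ldeuxd$ as $h \to 0$ (this is the standard approximation property, valid for $\bL^2$ data without extra regularity), for every fixed $\bzeta \in \Hrot$ we have $\tnorm{\bzeta - \calJ\upc_{h0}(\bzeta)}^2 = \omega^2\|\bzeta - \calJ\upc_{h0}(\bzeta)\|_\eps^2 + \|\ROTZ\bzeta - \calJ\upd_{h0}(\ROTZ\bzeta)\|_\bnu^2 \to 0$. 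Thus $\operatorname{dist}_{\tnorm{\SCAL}}(\bzeta, \bV\upc_{h0}) \to 0$ pointwise on $\overline{K}$. A standard compactness-plus-equicontinuity argument — the distance functions are $1$-Lipschitz in $\bzeta$ uniformly in $h$, and decreasing in $h$ (nested spaces), so Dini-type reasoning applies — then upgrades pointwise convergence to uniform convergence over the compact set $\overline{K}$, yielding $\sup_{\bzeta \in \overline{K}} \operatorname{dist}_{\tnorm{\SCAL}}(\bzeta, \bV\upc_{h0}) \to 0$, i.e., $\gpc \to 0$.

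Concretely, the cleanest implementation avoids even invoking Dini: given $\varepsilon > 0$, cover $\overline{K}$ by finitely many $\tnorm{\SCAL}$-balls of radius $\varepsilon$ centered at $\bzeta_1, \dots, \bzeta_N \in \overline{K}$; choose $h_0$ so small that $\operatorname{dist}_{\tnorm{\SCAL}}(\bzeta_i, \bV\upc_{h0}) \le \varepsilon$ for all $i$ and all $h \le h_0$ (possible since there are finitely many $\bzeta_i$); then for arbitrary $\bzeta \in \overline{K}$, pick $i$ with $\tnorm{\bzeta - \bzeta_i} \le \varepsilon$ and conclude $\operatorname{dist}_{\tnorm{\SCAL}}(\bzeta, \bV\upc_{h0}) \le \tnorm{\bzeta - \bzeta_i} + \operatorname{dist}_{\tnorm{\SCAL}}(\bzeta_i, \bV\upc_{h0}) \le 2\varepsilon$, so $\gpc \le 2\omega\varepsilon$ for $h \le h_0$.

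I expect the main obstacle to be establishing the compactness of the solution map $\btheta \mapsto \bzeta_\btheta$ into the \emph{energy} norm (not merely into $\Ldeuxd$), which requires the bootstrap step: once $\bzeta_{\btheta_n} \to \bzeta_\btheta$ in $\Ldeuxd$, one feeds this back into the weak formulation $b(\bzeta_{\btheta_n} - \bzeta_{\btheta_m}, \bzeta_{\btheta_n} - \bzeta_{\btheta_m}) = (\bzeta_{\btheta_n} - \bzeta_{\btheta_m}, \btheta_n - \btheta_m)_\eps + 2\omega^2\|\bzeta_{\btheta_n} - \bzeta_{\btheta_m}\|_\eps^2 - \ldots$ — more carefully, from $b(\bzeta_{\btheta_n}, \bw) = (\bw, \btheta_n)_\eps$ one gets $\|\ROTZ(\bzeta_{\btheta_n} - \bzeta_{\btheta_m})\|_\bnu^2 = \omega^2\|\bzeta_{\btheta_n} - \bzeta_{\btheta_m}\|_\eps^2 + (\bzeta_{\btheta_n} - \bzeta_{\btheta_m}, \btheta_n - \btheta_m)_\eps$, and the right-hand side tends to zero since $\{\btheta_n\}$ is bounded and $\bzeta_{\btheta_n} - \bzeta_{\btheta_m} \to 0$ in $\Ldeuxd$. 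The secondary technical point is confirming that the $\bL^2$-approximation properties of $\calJ\upc_{h0}$ and $\calJ\upd_{h0}$ indeed hold without regularity assumptions — this is classical (density of piecewise polynomials plus uniform $\bL^2$-stability of the projections gives pointwise convergence to the identity), so I would simply cite it.
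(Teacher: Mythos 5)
Your proof is correct, but it follows a genuinely different route from the paper. The paper never proves compactness of the adjoint solution map in the \emph{energy} norm; instead it exploits the commuting diagram property $\ROTZ\calJ\upc_{h0}=\calJ\upd_{h0}\ROTZ$ \emph{at the outset} to split $\tnorm{\bzeta_\btheta-\calJ\upc_{h0}(\bzeta_\btheta)}^2$ into the two $\bL^2$-quantities $\omega^2\|\bzeta_\btheta-\calJ\upc_{h0}(\bzeta_\btheta)\|_\eps^2$ and $\|\ROTZ\bzeta_\btheta-\calJ\upd_{h0}(\ROTZ\bzeta_\btheta)\|_\bnu^2$, and then constructs two separate finite $\delta$-nets of smooth functions: one for the set $\calB_\eps$ of adjoint solutions (using the compact embedding $\Hrotz\cap\Hdiveps\hookrightarrow\Ldeuxd$) and one for the set $\calB_\bnu$ of their curls (using the companion compact embedding for $\Hdivzdivz$ with $\bnu$-weighted curl in $\bL^2$). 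You instead prove the stronger fact that $\btheta\mapsto\bzeta_\btheta$ is a compact operator from $\Hrotzrotz^\perp\subset\Ldeuxd$ into $(\Hrot,\tnorm{\cdot})$, and for that you need the bootstrap step: strong $\bL^2$-convergence along a subsequence (from one application of Weber's embedding) plugged into $b(\bw_{km},\bw_{km})=(\bw_{km},\btheta_{n_k}-\btheta_{n_m})_\eps$ with $\bw_{km}=\bzeta_{\btheta_{n_k}}-\bzeta_{\btheta_{n_m}}$ to recover Cauchy-ness of the curls. This packages the crux more cleanly (compactness of the solution operator in the energy norm) and uses Weber's embedding only once, at the cost of the bootstrap; the paper's split is leaner because it avoids the bootstrap altogether by invoking compactness twice in $\bL^2$. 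Both arguments then finish identically: a finite net of centers, each approximated through $\bC^\infty_{\mathrm c}$ density plus the uniform $\bL^2$-stability of the quasi-interpolants. One small remark: your aside about a Dini-type argument implicitly assumes nested meshes ($\bV\upc_{h0}$ increasing as $h$ decreases), which is not assumed here; you correctly discard it in favor of the finite-cover argument, which does not need any nesting.
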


\begin{proof}
Let us set
\begin{align*}
\calB_\eps
&:=
\left \{
\bv \in \Hrotz \; | \;
\DIV(\eps\bv) = 0, \;
\|\ROTZ \bv\|_{\bmu}
\leq \bst \right \}, \\
\calB_\bnu
&:=
\left \{
\bw \in \Hdivzdivz \; | \;
\|\ROT (\bnu\bw)\|_{\eps^{-1}}
\leq 1+\bst \right \}.
\end{align*}
Owing to the definition~\eqref{eq:def_bst} of the stability constant
$\bst$, if $\btheta \in \Hrotzrotz^\perp$
with $\|\btheta\|_\eps = 1$ and $\bxi_\btheta \in \bV_0\upc$ satisfy
$b(\bw,\bxi_\btheta) = (\bw,\btheta)_\eps$ for all $\bw \in \bV_0\upc$,
we have $\bxi_\btheta \in \calB_\eps$ and $\bPhi_\btheta := \ROTZ
\bxi_\btheta \in \calB_\bnu$.

Owing to the compact injections established in \cite[Theorem 2.2]{Weber:80},
given any $\delta > 0$, there exists a finite
number $N_\delta$ of functions $\bv_j \in \calB_\eps$ and $\bw_\ell \in \calB_\bnu$
such that, for all $\bv \in \calB_\eps$ and all $\bw \in \calB_\bnu$, there exist
indices $j,\ell \in \{1{:} N_\delta\}$ such that
\begin{equation*}
\|\bv-\bv_j\|_\eps \leq \delta, \qquad \|\bw-\bw_\ell\|_\bnu \leq \delta.
\end{equation*}
Furthermore, the density of $\bC^\infty_{\rm c}(\Dom)$ in $\bL^2(\Dom)$ implies that
we can find $\widetilde \bv_j,\widetilde \bw_\ell \in \bC^\infty_{\rm c}(\Dom)$
such that
\begin{equation*}
\|\bv-\widetilde \bv_j\|_\eps \leq 2\delta, \qquad \|\bw-\widetilde \bw_\ell\|_\bnu \leq 2\delta.
\end{equation*}

We then write that
\begin{equation*}
\min_{\bv\upc_h \in \bV_{h0}\upc}
\tnorm{\bxi_{\btheta}-\bv\upc_h}^2
\leq
\tnorm{\bxi_{\btheta}-\calJ\upc_{h0}(\bxi_{\btheta})}^2
=
\omega^2\|\bxi_{\btheta}-\calJ\upc_{h0}(\bxi_{\btheta})\|^2_\eps
+
\|\bPhi_{\btheta}-\calJ\upd_{h0}(\bPhi_{\btheta})\|^2_\bnu.
\end{equation*}
Invoking the triangle inequality and the above bounds (with $\bv:=\bxi_{\btheta}$) gives
\begin{align*}
\|\bxi_{\btheta}-\calJ\upc_{h0}(\bxi_{\btheta})\|_\eps
&\leq
\|\bxi_{\btheta}-\widetilde \bv_j\|_\eps
+
\|\widetilde \bv_j-\calJ\upc_{h0}(\widetilde \bv_j)\|_\eps
+
\|\calJ\upc_{h0}(\widetilde \bv_j-\bxi_{\btheta})\|_\eps
\\
&\lesssim
\|\bxi_{\btheta}-\widetilde \bv_j\|_{\eps}
+
\|\widetilde \bv_j-\calJ\upc_{h0}(\widetilde \bv_j)\|_\eps
\leq
2\delta
+
\|\widetilde \bv_j-\calJ\upc_{h0}(\widetilde \bv_j)\|_\eps,
\end{align*}
where we used the $\bL^2$-stability of $\calJ\upc_{h0}$.  Similarly, we obtain
\begin{equation*}
\|\bPhi_{\btheta}-\calJ\upd_{h0}(\bxi_{\btheta})\|_\bnu
\lesssim
\|\bPhi_{\btheta}-\widetilde \bw_\ell\|_\bnu
+
\|\widetilde \bw_\ell-\calJ\upd_{h0}(\widetilde \bw_\ell)\|_\bnu
\leq
2\delta
+
\|\widetilde \bw_\ell-\calJ\upd_{h0}(\widetilde \bw_\ell)\|_\bnu.
\end{equation*}
Since the functions 
$\widetilde \bv_j$ and $\widetilde \bw_\ell$ are in finite number and smooth,
we can assume that, if the mesh is sufficiently refined,
\begin{equation*}
\|\widetilde \bv_j-\calJ\upc_{h0}(\widetilde \bv_j)\|_\eps
\leq
\delta,
\qquad
\|\widetilde \bw_j-\calJ\upd_{h0}(\widetilde \bw_j)\|_\bnu
\leq
\delta.
\end{equation*}
This completes the proof since $\delta>0$ is arbitrary.
\end{proof}

\begin{lemma}[Convergence of divergence conformity factor]
We have $\gdivc \to 0$ as $h \to 0$.
\end{lemma}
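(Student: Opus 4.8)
The plan is to deduce the claim from the bound $\gdivc \le \gamma_\calJ$ recorded in the remark following Lemma~\ref{lem:div_conf_conf}, and to show that $\gamma_\calJ \to 0$ as $h\to 0$ by a compactness argument parallel to the one just used for $\gpc$. The point is that $\gamma_\calJ = \omega\sup\{\|\bw-\calJ\upc_{h0}(\bw)\|_\eps \st \bw\in\bX\upc_0,\ \|\ROTZ\bw\|_\bnu = 1\}$ measures the interpolation error of $\calJ\upc_{h0}$ over a \emph{precompact} subset of $\Ldeuxd$, so that the regularity-dependent rightmost estimate in~\eqref{eq:gamma_calJ} plays no role in proving that $\gamma_\calJ$ vanishes.

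First I would fix $\delta > 0$ and consider $\mathcal{K} := \{\bw\in\bX\upc_0 \st \|\ROTZ\bw\|_\bnu \le 1\}$. Since $\ROTZ$ is injective on $\bX\upc_0$ (if $\bw\in\bX\upc_0$ and $\ROTZ\bw = \bzero$, then $\bw\in\Hrotzrotz\cap\Hrotzrotz^\perp = \{\bzero\}$), the compact embedding $\bX\upc_0\hookrightarrow\Ldeuxd$ of~\cite[Theorem~2.2]{Weber:80} furnishes the Poincar\'e--Friedrichs-type bound $\|\bw\|_\eps \lesssim \|\ROTZ\bw\|_\bnu$ on $\bX\upc_0$; hence $\mathcal{K}$ is bounded in $\Hrotz$ and, by the same embedding, precompact in $\Ldeuxd$. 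Consequently, there are finitely many $\bw_1,\dots,\bw_{N_\delta}\in\mathcal{K}$ such that every $\bw\in\mathcal{K}$ is within $\delta$ of some $\bw_\ell$ in the $\|\SCAL\|_\eps$-norm, and, by density of $\bC^\infty_{\rm c}(\Dom)$ in $\bL^2(\Dom)$, the $\bw_\ell$ may be replaced by smooth fields $\widetilde\bw_\ell\in\bC^\infty_{\rm c}(\Dom)$ so that every $\bw\in\mathcal{K}$ satisfies $\|\bw-\widetilde\bw_\ell\|_\eps \le 2\delta$ for some $\ell\in\{1{:} N_\delta\}$.

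Then, for any $\bw\in\mathcal{K}$ and the corresponding index $\ell$, I would use that $\calJ\upc_{h0}$ is a projection onto $\bV\upc_{h0}$ bounded in $\bL^2(\Dom)$ to write
\[
\|\bw-\calJ\upc_{h0}(\bw)\|_\eps
\le
\|\bw-\widetilde\bw_\ell\|_\eps
+ \|\widetilde\bw_\ell - \calJ\upc_{h0}(\widetilde\bw_\ell)\|_\eps
+ \|\calJ\upc_{h0}(\widetilde\bw_\ell - \bw)\|_\eps
\lesssim
\delta + \|\widetilde\bw_\ell - \calJ\upc_{h0}(\widetilde\bw_\ell)\|_\eps .
\]
Since the $\widetilde\bw_\ell$ are finitely many and smooth, the approximation properties of $\calJ\upc_{h0}$ ensure that $\|\widetilde\bw_\ell - \calJ\upc_{h0}(\widetilde\bw_\ell)\|_\eps \le \delta$ for all $\ell$ once the mesh is fine enough. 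Taking the supremum over $\bw\in\mathcal{K}$ yields $\gdivc \le \gamma_\calJ \lesssim \omega\delta$ for $h$ small enough, and the claim follows because $\delta>0$ is arbitrary.

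The main obstacle is the precompactness of $\mathcal{K}$ in $\Ldeuxd$: it rests on Weber's compactness theorem combined with the injectivity of $\ROTZ$ on $\bX\upc_0$, which is precisely what keeps the (possibly nontrivial, when $\front$ is disconnected) space of curl-free fields that are divergence-free for $\eps$ from rendering $\mathcal{K}$ unbounded, and which is the reason the supremum defining $\gamma_\calJ$ must be taken over $\bX\upc_0$ rather than over all curl-bounded fields in $\Hrotz$. Once this is granted, the covering-plus-density argument, the $\bL^2$-stability of $\calJ\upc_{h0}$, and its approximation properties on smooth fields make the remainder routine, mirroring the proof of the preceding lemma.
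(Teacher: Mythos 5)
Your proof is correct and follows essentially the same route as the paper's: reduce to $\gdivc \le \gamma_\calJ$, use Weber's compactness theorem to extract a finite $\delta$-net, approximate the net elements by smooth fields via density of $\bC^\infty_{\rm c}(\Dom)$ in $\bL^2(\Dom)$, and combine the $\bL^2$-stability and the smooth-field approximation property of $\calJ\upc_{h0}$. The one place you genuinely sharpen things is the choice of set on which compactness is invoked. The paper covers $\calB = \{\bw\in\Hrotz : \DIV(\eps\bw)=0,\ \|\ROTZ\bw\|_\bnu\le 1\}$, which it calls the ``unit ball,'' but which is actually unbounded in $\Ldeuxd$ (hence not precompact) whenever the space of curl-free, $\eps$-divergence-free fields in $\Hrotz$ is nontrivial, i.e., when $\front$ is disconnected; the paper only remarks afterwards that the relevant fields lie in the subset $\{\bw\in\bX\upc_0 : \|\ROTZ\bw\|_\bnu\le 1\}\subset\calB$. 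You work directly with that subset, $\mathcal K$, and supply the missing justification of its precompactness: $\ROTZ$ is injective on $\bX\upc_0 = \Hrotz\cap\Hrotzrotz^\perp$, so Weber's embedding yields a Poincar\'e--Friedrichs inequality $\|\bw\|_\eps\lesssim\|\ROTZ\bw\|_\bnu$ there, whence $\mathcal K$ is bounded and then precompact. This closes a small gap in the paper's presentation and makes the argument uniformly valid for all topologies, in keeping with the paper's stated intent to make no assumption on the topology of $\Dom$; when $\front$ is connected, $\calB=\mathcal K$ and the two arguments coincide.
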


\begin{proof}
We use the bound $\gdivc \le \gamma_\calJ$ with $\gamma_\calJ$ defined in~\eqref{eq:gamma_calJ}.
Thus, it suffices to show that $\gamma_\calJ\to0$ as $h\to0$.

We consider the unit ball
$\calB := \{ \bw \in \Hrotz; \DIV(\eps\bw) = 0, \; \|\ROTZ\bw\|_\bnu \leq 1\}$.
It is established in \cite[Theorem 2.2]{Weber:80} that the embedding
$\Hrotz \cap \Hdiveps \hookrightarrow \bL^2(\Dom)$ is compact. As a
result, given any $\delta > 0$, there exists a finite number $N_\delta$
of elements $\bv_j \in \calB$ such that, for all $\bw \in \calB$,
$\|\bw-\bv_j\|_\eps \leq \delta$ for some index $j\in \{1{:}N_\delta\}$.
Moreover, since $\bC^\infty_{\rm c}(\Dom)$ is dense in $\bL^2(\Dom)$, for each
$j\in \{1{:}N_\delta\}$, there exists $\widetilde \bv_j \in \bC^\infty_{\rm c}(\Dom)$
such that $\|\bv_j-\widetilde \bv_j\|_\eps \leq \delta$. We have therefore shown
that for all $\bw \in \calB$, there exists an index $j\in\{1{:}N_\delta\}$ such that
\begin{equation*}
\|\bw-\widetilde \bv_j\|_\eps \leq 2\delta.
\end{equation*}

We can now conclude by using the same arguments as in the above proof
(notice that $\{ \bw \in \bX\upc_0, \; \|\ROTZ\bw\|_\bnu \leq 1\} \subset \calB$).
\end{proof}

\bibliographystyle{plain}
\bibliography{biblio}

\end{document}